\newcounter{mycount}
\theoremstyle{plain}
\newtheorem{theorem}[mycount]{Theorem}
\newtheorem{lemma}[mycount]{Lemma}
\newtheorem{remark}{Remark}
\theoremstyle{definition}
\newtheorem{definition}{Definition}
\theoremstyle{example}
\theoremstyle{remark}
\numberwithin{equation}{section} \numberwithin{figure}{section}
\begin{document}
\title{Graph limits of random graphs from a subset of connected  $k$-trees}

\author{Michael Drmota, Emma Yu Jin$^{^*}$ and Benedikt Stufler}
\thanks{$^{^*}$Corresponding author email: yu.jin@tuwien.ac.at; Tel.: $+43(1)58801-104583$. The first author is partially supported by the Austrian Science Fund FWF, Project SFB F50-02. The second author was supported by the German Research Foundation DFG, JI 207/1-1, and is supported by the Austrian Research Fund FWF, Project SFB F50-03. The third author is supported by the German Research Foundation DFG, STU 679/1-1}
\address{Institut f\"{u}r Diskrete Mathematik und Geometrie, Technische Universit\"{a}t Wien, Wiedner Hauptstr. 8–10, 1040 Vienna, Austria}
\email{michael.drmota@tuwien.ac.at}
\address{Institut f\"{u}r Diskrete Mathematik und Geometrie, Technische Universit\"{a}t Wien, Wiedner Hauptstr. 8–10, 1040 Vienna, Austria}
\email{yu.jin@tuwien.ac.at}
\address{Unit\'e de Math\'ematiques Pures et Appliqu\'ees, \'Ecole Normale Supérieure de Lyon, 46 all\'ee d'Italie, 69364 Lyon Cedex 07, France}
\email{benedikt.stufler@ens-lyon.fr}
\maketitle
\begin{abstract}
For any set $\Omega$ of non-negative integers such that $\{0,1\}\subseteq \Omega$ and $\{0,1\}\ne \Omega$,
we consider a random $\Omega$-$k$-tree ${\sf G}_{n,k}$ that is uniformly selected from all connected $k$-trees of $(n+k)$ vertices where the number of $(k+1)$-cliques that contain any fixed $k$-clique belongs to $\Omega$.
We prove that 
${\sf G}_{n,k}$, scaled by $(kH_{k}\sigma_{\Omega})/(2\sqrt{n})$ where $H_{k}$ is the $k$-th Harmonic number and $\sigma_{\Omega}>0$,
converges to the Continuum Random Tree $\CMcal{T}_{{\sf e}}$.
Furthermore, we prove the local convergence of the rooted random $\Omega$-$k$-tree ${\sf G}_{n,k}^{\circ}$ to an infinite but locally finite random $\Omega$-$k$-tree ${\sf G}_{\infty,k}$.
\end{abstract}
{\bf \small Keywords}: {\small partial $k$-trees, Continuum Random Tree, modified Galton-Watson tree}
\section{Introduction and main results}\label{S:intro}
A {\em $k$-tree} is a generalization of a tree and can
be defined recursively: a {\it $k$-tree} is either a complete graph on $k$ vertices (= a $k$-clique) or a graph obtained from a smaller $k$-tree by adjoining a
new vertex together with $k$ edges connecting it to a $k$-clique of
the smaller $k$-tree (and thus forming a $(k+1)$-clique).
In particular, a $1$-tree is a usual tree. (Note that the parameter $k$ is always fixed.) Subgraphs of $k$-trees are called {\em partial $k$-trees}; see Figure~\ref{F:11}.

\begin{figure}[htbp]
\begin{center}
\includegraphics[scale=0.6]{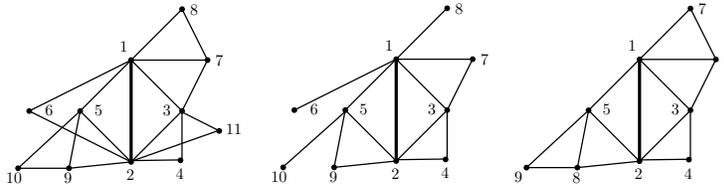}
\caption{a $2$-tree (left), a partial $2$-tree (middle) and an $\Omega$-$2$-tree (right) where $\Omega=\{0,1,2\}$.
\label{F:11}}
\end{center}
\end{figure}
A partial $k$-tree is an interesting graph from an algorithmic point of view since many
NP-hard problems on graphs have polynomial, in fact usually linear, dynamic
programming algorithms when restricted to partial $k$-trees for fixed values of $k$ \cite{AP:89,TeP:05,GK:08}; such NP-hard problems include maximum independent
set size, minimal dominating set size, chromatic number, Hamiltonian circuit,
network reliability and minimum vertex removal forbidden subgraph \cite{A:85,BB:72}.
Several graphs which are important in practice \cite{K:03}, have been shown to be
partial $k$-trees, among them are
\begin{enumerate}
\item Trees/ Forests (partial $1$-trees)
\item Series parallel networks (partial $2$-trees)
\item Outplanar graphs (partial $2$-trees)
\item Halin graphs (partial $3$-trees); see \cite{HandGT}.
\end{enumerate}
However, other interesting graph classes like planar graphs or bipartite graphs are not partial $k$-trees. On the other hand, partial $k$-trees are very interesting from a combinatorial point of view, although the enumeration of partial $k$-trees for general $k$ is still missing. The number of $k$-trees, which are ``saturated'' partial $k$-trees, has been counted in various ways; see \cite{BP:69,Moon,Foata,DS:09,HP:68,HP:73,Fowler:02,Gai,GeGai}.
As usual a graph on $n$ vertices is called {\em labelled} if the integers from $\{1,2,\ldots,n\}$ have been assigned to its vertices (one-to-one) and two labelled graphs are considered to be different if the corresponding edge sets are different.

In this paper, we introduce a subset of connected labelled  $k$-trees, called {\em $\Omega$-$k$-trees} as a first attempt to approach the profile of connected labelled partial $k$-trees by using the enumeration of labelled $k$-trees. In what follows, without specifying otherwise, we assume that $\Omega$-$k$-trees are all labelled and a random $\Omega$-$k$-tree is uniformly selected from the class of labelled $\Omega$-$k$-trees with $(n+k)$ vertices.
\begin{definition}[$\Omega$-$k$-tree]
For any set $\Omega$ of non-negative integers which contains $0,1$ and at least one integer greater than $1$, an $\Omega$-$k$-tree is a connected  $k$-tree satisfying
that the number of $(k+1)$-cliques that contain any fixed $k$-clique belongs to the set $\Omega$.
\end{definition}
A rooted $\Omega$-$k$-tree is an $\Omega$-$k$-tree rooted at a $k$-clique. If $\Omega=\mathbb{N}_0=\{0,1,2,\ldots\}$, an $\mathbb{N}_0$- $k$-tree is a $k$-tree. See Figure~\ref{F:11} for an example of $\Omega$-$2$-tree. We remark that it is necessary to allow $0\in\Omega$ since by the construction of $k$-trees, a $k$-clique is the smallest $k$-tree. We also need the condition $1\in\Omega$ because otherwise any $k$-tree, other than a single $k$-clique, is infinite, and we ignore the case $\Omega=\{0,1\}$ so that the $k$-trees are not trivial.

Darrasse and Soria \cite{DS:09} showed a Rayleigh limiting distribution for the expected distance between pairs of vertices in a random $k$-tree, as it is known for usual trees and, thus, for $1$-trees. Inspired by this results, we expect that a random $\Omega$-$k$-tree with $(n+k)$ vertices, after scaling the distances to the root by $1/\sqrt{n}$, converges to the {\em Continuum Random Tree} multiplied by a deterministic scaling factor. For $k=1$ and $\Omega=\mathbb{N}_0=\{0,1,2,\ldots\}$, this is true by a result of Aldous. Actually Aldous has proved in a series of seminal papers \cite{Ald:1,Ald:2,Ald:3} that a {\em critical Galton-Watson tree} conditioned on its size has the Continuum Random Tree (CRT) as its limiting object -- and random $1$-trees are a special case (with a Poisson offspring distribution), if the variance of the progeny is finite. The concept {\em Continuum Random Tree} was also introduced by Aldous \cite{Ald:1,Ald:2,Ald:3} and further developed by Duquesne and Le Gall \cite{Dus:1,Dus:2,Dus:3}.

Since Aldous's pioneering work on the Galton-Watson trees, the CRT has been established as the limiting object of a large variety of combinatorial structures \cite{Haas-Miermont,Stu:14,
P:14,P:142,Car:14,JS:15,JB:15,CHK:2014,Markert-Miermont,Broutin-Miermont}. A key idea in the study of these combinatorial objects is to relate them to trees endowed with additional structures by using an appropriate bijection. In the present case of $\Omega$-$k$-trees, we encode them as so-called $(k,\Omega)$-front coding trees via a bijection due to Darrasse and Soria in \cite{DS:09}, which was originally used to enumerate $k$-trees and to recursively count the distance between any two vertices in a random $k$-tree. Furthermore, in order to build a connection between the distance of two vertices in a random $\Omega$-$k$-tree and the distance of two vertices in a critical Galton-Watson tree, we need to introduce the concept of a {\em size-biased enriched tree}. This is adapted from the {\em size-biased Galton-Watson tree} which was defined by Kesten \cite{Kesten}, used by Lyons, Pemantle and Peres in \cite{Lyons}, by Addario-Berry, Devroye and Janson in \cite{J:12}, and was further generalized to the {\em size-biased $\CMcal{R}$-enriched trees} by Panagiotou, Stufler and Weller in \cite{P:142}. Our enriched tree is slightly different to the size-biased $\CMcal{R}$-enriched tree and we use their ideas in \cite{Stu:14,Stu:142} where an important step is to relate the distance between two vertices in a random graph to the distance between two blocks in a random size-biased $\CMcal{R}$-enriched tree.

When we analyze $\Omega$-$k$-trees, it turns out that it is convenient to consider the number of {\it hedra} instead of the number of vertices as the size of an $\Omega$-$k$-tree; we adopt the notions from \cite{GeGai}. A {\it hedron} is a $(k+1)$-clique in an $\Omega$-$k$-tree, and by definition an $\Omega$-$k$-tree with $n$ hedra has $(n+k)$ vertices. A {\it front} of a $k$-tree is a $k$-clique.

Our first main result establishes the weak convergence of a random $k$-tree to the CRT with respect to the Gromov-Hausdorff distance.
\begin{theorem}\label{T:crtk}
Let $\CMcal{G}_{n,k}$ be the class of labelled $\Omega$-$k$-trees with $n$ hedra and denote by ${\sf G}_{n,k}$ a random $\Omega$-$k$-tree that is uniformly selected from the class $\CMcal{G}_{n,k}$. 
Then
\begin{eqnarray*}
({\sf G}_{n,k}, \frac{kH_k\sigma_{\Omega}}{2\sqrt{n}} d_{{\sf G}_{n,k}}) \xrightarrow{d}(\CMcal{T}_{e}, d_{\CMcal{T}_{e}})
\end{eqnarray*}
holds with respect to the Gromov-Hausdorff metric. Here $H_{k} = 1 + 1/2 + \ldots + 1/k$ denotes the $k$-th Harmonic number and $\sigma_{\Omega}$ is a positive constant. If $\Omega=\mathbb{N}_0$, the constant $\sigma_{\mathbb{N}_0}$ equals $1$.
\end{theorem}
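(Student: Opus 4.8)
\textit{Strategy.} The plan is to lift the Gromov--Hausdorff convergence from a conditioned Galton--Watson tree to the $\Omega$-$k$-tree via the Darrasse--Soria encoding, the two essential ingredients being (i) identifying the random coding tree as a critical, finite-variance Galton--Watson tree, and (ii) comparing the graph metric of ${\sf G}_{n,k}$ with the tree metric of its code.

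\textit{Encoding and the Galton--Watson structure.} Recall the Darrasse--Soria bijection: a rooted $\Omega$-$k$-tree with $n$ hedra corresponds to a $(k,\Omega)$-front coding tree, built recursively from a root front that carries a set of hedra of cardinality in $\Omega$, each hedron carrying an ordered $k$-tuple of child fronts, each of them itself the code of a rooted $\Omega$-$k$-tree. On exponential generating functions this becomes the implicit equation $y(x)=\varphi\!\bigl(x\,y(x)^{k}\bigr)$ with $\varphi(t)=\sum_{j\in\Omega}t^{j}/j!$. Since $\Omega$ contains an integer larger than $1$, $\varphi$ is not affine, so $y$ has a square-root singularity at a finite radius $\rho>0$; moreover the equation pinning down $\rho$ is exactly the criticality condition for the associated branching process. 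Contracting the deterministic $k$-ary hedron nodes, one obtains a tree ${\sf H}_n$ on the $n$ hedra that encodes ${\sf G}_{n,k}$ --- in its rooted version ${\sf G}_{n,k}^{\circ}$, which does not affect the Gromov--Hausdorff limit --- as a simply generated tree, equivalently a conditioned Galton--Watson tree, whose offspring law is critical with a finite positive variance $\sigma_{\Omega}^{2}$; in the Poisson case $\Omega=\mathbb{N}_{0}$ this law turns out to be $\mathrm{Poisson}(1)$ for every $k$, so $\sigma_{\mathbb{N}_{0}}=1$. By the scaling-limit theorems for conditioned Galton--Watson trees (Aldous, with the refinements of Duquesne--Le Gall and of Haas--Miermont cited in the introduction), $\bigl({\sf H}_n,\tfrac{\sigma_{\Omega}}{2\sqrt n}d_{{\sf H}_n}\bigr)\xrightarrow{d}(\CMcal{T}_{\sf e},d_{\CMcal{T}_{\sf e}})$ in the Gromov--Hausdorff sense.

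\textit{From the tree metric to the graph metric.} Identifying each vertex of ${\sf G}_{n,k}$ with the hedron that introduces it, the crucial step is to prove that, with probability tending to $1$,
\[
\sup_{u,v}\ \Bigl|\, d_{{\sf G}_{n,k}}(u,v)-\tfrac{1}{kH_{k}}\,d_{{\sf H}_n}(u,v)\,\Bigr| = o(\sqrt n),
\]
i.e.\ that the obvious correspondence between the two metric spaces has distortion $o(\sqrt n)$; this converts the convergence of ${\sf H}_n$ into the asserted convergence of ${\sf G}_{n,k}$. The factor $1/(kH_{k})$ arises because a shortest path in ${\sf G}_{n,k}$ runs alongside the coding-tree path but shortcuts through the shared $k$-cliques: descending along a chain of hedra, from the current front one jumps to the front introduced by its \emph{oldest} still-present vertex, which drops the level by $1+A$, where $A$ is the age of that vertex; the $k$-set forming the front evolves as an explicit Markov chain whose stationary oldest-age has mean $kH_{k-1}$, so the level decreases by $kH_{k}$ per step in the limit. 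To make this rigorous I would first treat the one-point function, the distance from the root front to a uniformly chosen vertex, via a Kesten-type size-biased enriched tree: along the distinguished spine the front follows the above Markov chain on $k$-subsets, and a stationary analysis together with a law of large numbers concentrates this distance around $\tfrac{1}{kH_{k}}$ times the number of spinal hedra --- with a matching lower bound, arguing that no path can descend faster on average. One then bootstraps to the uniform-over-pairs statement by a second-moment / union-bound argument over the $O(n^{2})$ pairs of vertices, using the tail estimate from the one-point analysis; this follows the scheme of Stufler and of Panagiotou--Stufler--Weller recalled in the introduction, the only new ingredient being the front record process that produces $H_{k}$.

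\textit{Conclusion and main difficulty.} Combining the two steps, $d_{{\sf H}_n}\sim kH_{k}\,d_{{\sf G}_{n,k}}$ turns the limit of the previous paragraph into $\bigl({\sf G}_{n,k},\tfrac{kH_{k}\sigma_{\Omega}}{2\sqrt n}d_{{\sf G}_{n,k}}\bigr)\xrightarrow{d}(\CMcal{T}_{\sf e},d_{\CMcal{T}_{\sf e}})$, with $\sigma_{\mathbb{N}_{0}}=1$. I expect the metric-transfer step to be the main obstacle: proving that the graph distance is, \emph{uniformly over all pairs of vertices}, asymptotically a fixed multiple of the coding-tree distance. The one-point concentration through the size-biased enriched tree --- especially the matching lower bound on distances --- is the technical heart, and the passage from pointwise estimates to a supremum over pairs has to be carried out with enough uniformity that no exceptional pair of vertices can spoil the Gromov--Hausdorff bound.
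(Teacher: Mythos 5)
Your proposal follows essentially the same route as the paper: encode via the Darrasse--Soria bijection, identify the contracted coding tree as a critical conditioned Galton--Watson tree with variance $\sigma_{\Omega}^2$ to which Aldous's theorem applies, and then control the metric distortion uniformly over pairs by a Kesten-type size-biased spine argument in which the constant $kH_k$ emerges as the expected number of tree levels per unit of graph distance (the paper organizes this through a block decomposition induced by the Darrasse--Soria distance algorithm and the random variables $\zeta_{i,k}$ with $\mathbb{E}\,\zeta_{i,k}=kH_k$, which is the rigorous form of your front-age Markov chain). The argument is sound and the constants, including $\sigma_{\mathbb{N}_0}=1$ from the Poisson$(1)$ offspring law, match the paper's.
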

In particular this shows that the diameter as well as the expected distance of two vertices in a random $\Omega$-$k$-tree ${\sf G}_{n,k}$ are of order $\sqrt n$ and they have up to a constant scaling factor the same limiting distribution as random $1$-trees. The constant $kH_k$ has also a natural explanation. In the proof of Theorem~\ref{T:crtk} we will partition an $\Omega$-$k$-tree into {\it rooted blocks} that constitute subsets of the same distance to the root of the $\Omega$-$k$-tree, and $kH_k$ is actually the expected length of the path from the selected {\it good node} in a block to the root of this block. Instead of the class $\CMcal{G}_{n,k}$ we could equivalently also consider the class of $\Omega$-$k$-trees with $n$ hedra that are rooted at a fixed labelled front. In Subsection~\ref{ss:red} below we will argue that the two models are equivalent and hence our results apply to both.

We recall that (partial) $1$-trees are just trees and partial $2$-trees are series-parallel graphs.
In both cases it is known \cite{Ald:3,P:14} that the CRT appears as the scaling limit
(if we scale by $c/\sqrt n$ for some positive constant $c$). We conjecture that the CRT also arises as the scaling limit of partial $k$-trees for larger $k$.

\medskip\noindent
{\bf Conjecture 1.} {\it Let $\CMcal{PT}_{n,k}$ be the class of
all connected labelled partial $k$-trees and let ${\sf PT}_{n,k}$ be a uniformly chosen random graph from $\CMcal{PT}_{n,k}$. Then ${\sf PT}_{n,k}$ converges toward the CRT in the Gromov-Hausdorff sense for every $k \ge 1$, after rescaling the metric by a factor $c_k/\sqrt{n}$ for some constant $c_k>0$.
}
\medskip

At the moment this property seems to be out of reach since there is no precise asymptotic analysis
of partial $k$-trees if $k\ge 3$. Nevertheless Theorem~\ref{T:crtk} is a strong indication that such
a property should hold. For example, if we delete $o(\sqrt n)$ edges from a random $\Omega$-$k$-tree
we (usually) do not destroy the connectivity and also the distance function might be slightly affected
but not more than $o(\sqrt n)$. Thus, if we construct partial $k$-trees in that way we still observe
a scaling limit of the above form.

\medskip

Theorem~\ref{T:crtk} describes the asymptotic global metric properties of random $k$-trees, but gives little information about asymptotic local properties. Hence we provide a second limit theorem that establishes the local weak convergence of the random $\Omega$-$k$-tree ${\sf G}_{n,k}$ toward an infinite but locally finite $\Omega$-$k$ tree ${\sf G}_{\infty,k}$. This type of convergence describes the asymptotic behaviour of neighborhoods around a randomly chosen front.

\begin{theorem}
	\label{T:local}
	Let $\CMcal{G}_{n,k}$ be the class of labelled $\Omega$-$k$-trees with $n$ hedra and denote by ${\sf G}_{n,k}^\circ$ a random $\Omega$-$k$-tree that is uniformly selected from the class $\CMcal{G}_{n,k}$ and then rooted at a uniformly at random chosen front. Then, as $n$ tends to infinity, the random graph ${\sf G}_{n,k}^\circ$ converges in the local-weak sense toward a front-rooted infinite $\Omega$-$k$-tree ${\sf G}_{\infty,k}$, that is,
\[
G_{n,k}^\circ \xrightarrow{d} {\sf G}_{\infty,k}.
\]

\end{theorem}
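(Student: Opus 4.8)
## Proof proposal for Theorem~\ref{T:local}

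\begin{proof}[Proof strategy for Theorem~\ref{T:local}]
The plan is to transport the problem, via the Darrasse--Soria bijection between front-rooted $\Omega$-$k$-trees and $(k,\Omega)$-front coding trees \cite{DS:09}, to a question about the Benjamini--Schramm limit of a conditioned simply generated tree, and then to push the convergence back through the encoding. Under that encoding a front-rooted $\Omega$-$k$-tree with $n$ hedra corresponds weight-preservingly to a front coding tree of size $n$, a plane tree with the alternating front/hedron structure in which each hedron node carries exactly $k$ descendant fronts and each front node branches into a number of hedra with multiplicity in $\Omega$; hence ${\sf G}_{n,k}^\circ$ is the image of a Galton--Watson tree conditioned to have total size $n$ (more precisely an enriched tree in the sense of \cite{P:142}, whose vertices carry structures that distribute the labels), with a weight sequence determined by $k$ and $\Omega$. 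As in the proof of Theorem~\ref{T:crtk} one tilts this weight sequence to its unique critical exponential rescaling, obtaining an offspring law $\mu$ of mean $1$ and finite variance (a positive multiple of $\sigma_\Omega^2$) whose probability generating function has positive radius of convergence. Modulo the bookkeeping for the two node types and for the special status of the root front, rooting the $\Omega$-$k$-tree at a uniformly random front corresponds to re-rooting this coding tree at a uniformly random vertex.

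By the classical theory of local limits of conditioned Galton--Watson trees \cite{Kesten, J:12} --- see also the size-biased enriched trees of \cite{P:142, Stu:14}, which the present paper adapts to front coding trees --- a critical, finite-variance Galton--Watson tree conditioned on its size, viewed from a uniformly random vertex, converges in the Benjamini--Schramm sense to a modified Galton--Watson tree $\hat{\sf T}$: an infinite tree with a single semi-infinite spine, along which each vertex branches according to the size-biased offspring law (the marked root behaving like a generic vertex with one extra spine-edge), and off which independent ordinary $\mu$-Galton--Watson trees are grafted. One then \emph{defines} ${\sf G}_{\infty,k}$ to be the image of $\hat{\sf T}$ under the natural extension of the Darrasse--Soria bijection to infinite front coding trees: a one-ended infinite $\Omega$-$k$-tree containing exactly one infinite ray of fronts, with independent finite $\Omega$-$k$-tree pieces attached along it.

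To conclude one transfers the convergence through the bijection. The crucial point is that the Darrasse--Soria correspondence is \emph{local}: the ball of graph-radius $r$ around a front of an $\Omega$-$k$-tree is a deterministic function of the ball of radius $O_k(r)$ around the corresponding vertex of the coding tree, and conversely, since in a $k$-tree a unit step in graph distance costs only boundedly many steps in the coding tree. Hence the Benjamini--Schramm convergence of the random pointed coding trees transfers to Benjamini--Schramm convergence of the random pointed $\Omega$-$k$-trees, with limit ${\sf G}_{\infty,k}$. Finally ${\sf G}_{\infty,k}$ is almost surely locally finite, because $\mu$ and its size-biased version are proper probability laws, so every front lies in finitely many hedra and every ball of ${\sf G}_{\infty,k}$ is finite.

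The step I expect to be the main obstacle is making the locality of the bijection quantitative and uniform in $n$: one must show that graph-distance balls in the $\Omega$-$k$-tree and combinatorial balls in the front coding tree determine one another with a distortion bounded independently of $n$, so that local convergence really does pass through the encoding. A secondary, bookkeeping-type difficulty is to check that sampling a uniformly random front is asymptotically equivalent to sampling a uniformly random vertex of the coding tree --- accounting for the two node types, for the $1+kn$ versus $n$ normalisations, and for the root front carrying the law $\mu$ rather than its size-biased version --- without creating or destroying size-biasing in the limit.
\end{proof}
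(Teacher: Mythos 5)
Your overall route --- encode via the Darrasse--Soria bijection, identify the conditioned coding tree with a (root-modified) critical Galton--Watson tree, take the Kesten-type size-biased local limit, and push the convergence back through the encoding --- is the same as the paper's. There is, however, a genuine gap at precisely the step you flagged as the main obstacle, and a secondary misstep in the choice of limit theorem.

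The gap: the claimed locality of the bijection is false for $k\ge 2$. It is not true that the graph-ball of radius $r$ around the root front is a function of the ball of radius $O_k(r)$ around the root of the coding tree. Running Algorithm~1 from the root distance sequence $(0,1^{k-1})$, all but one of the white children of a depth-one black node inherit the sequence $(0,1^{k-1})$ again, so the coding tree can contain arbitrarily long chains of black nodes all of which are at graph distance $1$ from the vertex labelled $1$; the subtree of the coding tree that encodes even $U_1({\sf G}_{n,k}^\circ)$ therefore has unbounded height. Only your ``conversely'' direction (a step in the coding tree costs boundedly many steps in graph distance) holds, and that is not the direction needed to transfer local convergence. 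The paper's repair is the observation that, for a \emph{fixed} finite target graph $G$, the subtree $T_\ell$ with $G(T_\ell)=U_\ell(G(T))$ can have at most as many vertices as $G$ has fronts on the event $U_\ell(G(T))=G$; hence its height is bounded by a constant $L=L(G,\ell)$, the event $\{U_\ell(G(T))=G\}$ is measurable with respect to the truncation $T^{[L]}$, and Lemma~\ref{le:trconv} then yields convergence of the probabilities. Without this (or an equivalent) argument, your transfer step does not go through.

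The secondary issue: you propose to re-root the coding tree at a uniformly random vertex and invoke the Benjamini--Schramm limit. For a conditioned Galton--Watson tree the uniform-vertex local limit is the invariant sin-tree (rooted partway along an infinite ancestral line), not Kesten's tree, which is the object you actually describe and the one needed here. The paper instead uses exchangeability of front-rootings (every $\Omega$-$k$-tree with $n$ hedra has exactly $kn+1$ fronts, cf.\ Subsection~\ref{ss:red}) to reduce to the tree rooted at the fixed root front, i.e.\ to the coding tree viewed from its own root, where the Kesten limit ${\sf T}_{\infty,\circ}$ with modified root degree $\eta$ is the correct statement; this is what Lemma~\ref{le:trconv} proves via the cycle lemma and a local central limit theorem. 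Your limit object is right, but the theorem you cite does not match the rooting you chose, and the clean way out is the exchangeability reduction rather than a uniform re-rooting.
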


Our proof of Theorem~\ref{T:local} builds on the classical local convergence of simply generated trees toward a modified Galton--Watson tree. See for example Theorem 7.1 in Janson's survey \cite{Janson:11}, which unifies some results by Kennedy \cite{Kennedy}, Aldous and Pitman \cite{AldPit}, Grimmett \cite{Grimmett}, Kolchin \cite{Kolchin}, Kesten \cite{Kesten}, Aldous \cite{Ald:2}, Jonsson and Stef\'{a}nsson \cite{JS:11} and Janson, Jonsson and Stef\'{a}nsson \cite{JJS:11}.

A result similar to Theorem~\ref{T:local} is known for partial $2$-trees since series-parallel graphs
belong to the family of subcritical graph classes \cite{Stu:142,GeWag}. Therefore we can also formulate the following conjecture.

\medskip\noindent
{\bf Conjecture 2.} {\it The random labelled partial $k$-tree ${\sf PT}_{n,k}$ converges in the local-weak sense for every $k \ge 1$. That is, the neighborhoods of a random front in ${\sf PT}_{n,k}$ converge weakly toward the neighborhoods of a front-rooted infinite partial $k$-tree ${\sf PT}_{\infty,k}$ as $n \to \infty$. 
}

\medskip

The plan of the paper is as follows. In Section~\ref{S:comb} we recall the combinatorial background for $\Omega$-$k$-trees, introduce the Boltzmann sampler -- a method of generating efficiently a uniform random combinatorial object, describe Darrasse and Soria's algorithm on computing the distances between two vertices in an $\Omega$-$k$-tree, present Aldous's result on the convergence of critical Galton-Watson trees to the CRT $\CMcal{T}_{{\sf e}}$, and recall the notion of local convergence. In Section~\ref{S:proof} we prove our first main result -- Theorem~\ref{T:crtk}, and in Section~\ref{L:proof} our second main result -- Theorem~\ref{T:local}.

\section{Combinatorics, Boltzmann Samplers and Graph Limits}\label{S:comb}
Let $\Omega\subset \mathbb{N}_0$ denote a set of non-negative integers which contains $0,1$ and at least one integer greater than $1$. We will review the generating function approach from \cite{DS:09} to count the number ${\sf Par}_{k,\Omega}(n)$ of $\Omega$-$k$-trees. The key ingredient to count the number ${\sf Par}_{k,\Omega}(n)$ is a bijection between rooted $\Omega$-$k$-trees and {\em $(k,\Omega)$-front coding trees}; see \cite{DS:09}.
\begin{definition}[$(k,\Omega)$-front coding tree]\label{D:kcoding}
For any set $\Omega$ of non-negative integers which contains $0,1$ and at least one integer greater than $1$, a {\em $(k,\Omega)$-front coding tree} of size $n$ is a tree $T$ consisting of $(kn+1)$ white nodes and $n$ black nodes which satisfies:
\begin{enumerate}
\item $T$ is rooted at a white node, every white node has only black nodes as children and every black node has only $k$ white nodes as children.
\item The number of black children of the white root belongs to the set $\Omega$ and the number of black children of any other white node belongs to the set $\{i\,\vert\,i+1\in\Omega,i\ge 0\}$.
\item The white root of $T$ is labeled by a $k$-subset $A$ of $[n+k]=\{1,2,\ldots,n+k\}$ and the black nodes are labeled by the integers from the set $[n+k]-A$ such that for every white node, the subtrees stemming from its black children are not ordered between themselves.
\end{enumerate}
The labels on the white root and black nodes determine the labels on the rest white nodes. We start from the white root and recursively label other white nodes. For every white node, we label it with a set $\{r_1,\ldots,r_{i-1},r,r_{i+1},\ldots,r_k\}$ if the white node is the $i$-th child (from left to right) of a black node labeled by $r$ and the white parent of this black node is labeled with the set $\{r_1,\ldots,r_k\}$.

If the white root of a $(k,\Omega)$-front coding tree has precisely one black child, we call it {\em reduced $(k,\Omega)$-front coding tree}.
\end{definition}
We first list all important notations of $\Omega$-$k$-trees and $(k,\Omega)$-front coding trees that are necessary in our argument.
\begin{enumerate}
\item $\CMcal{G}_{n,k}$: the class of labelled $\Omega$-$k$-trees with $n$ hedra.
\item ${\sf G}_{n,k}$: a random $\Omega$-$k$-tree that is uniformly selected from the class $\CMcal{G}_{n,k}$.
\item ${\sf G}_{n,k}^{\circ}$: a random $\Omega$-$k$-tree ${\sf G}_{n,k}$ that is rooted at a uniformly chosen front.
\item $\CMcal{G}_{n,k}^{\square}$: the class of labelled $\Omega$-$k$-trees with $n$ hedra that are rooted at a fixed front $\{1,2,\ldots,k\}$.
\item $\CMcal{G}_{n,k}^{\bullet}$: the class of labelled $\Omega$-$k$-trees with $n$ hedra that are rooted at a fixed front $\{1,2,\ldots,k\}$ and this root front is contained in only one hedron.
\item $\CMcal{C}_{n,k}$: the class of $(k,\Omega)$-front coding trees of size $n$ that are rooted at a white node $\{1,2,\ldots,k\}$.
\item ${\sf C}_{n,k}$: a random $(k,\Omega)$-front coding tree that is uniformly selected from $\CMcal{C}_{n,k}$.
\item $\CMcal{B}_{n,k}$: the class of reduced $(k,\Omega)$-front coding trees of size $n$ that are rooted at a white node $\{1,2,\ldots,k\}$.
\item ${\sf B}_{n,k}$: a random reduced $(k,\Omega)$-front coding tree that is uniformly selected from $\CMcal{B}_{n,k}$.
\item ${\sf G}_{n,k}^{\bullet}$: a random $\Omega$-$k$-tree that uniquely corresponds to ${\sf B}_{n,k}$ under the bijection $\varphi$ where the bijection $\varphi$ will be shown in subsection~\ref{ss:enum}. This is equivalent to uniformly choose a random $\Omega$-$k$-tree from the class $\CMcal{G}_{n,k}^{\bullet}$.
\item ${\sf G}_{n,k}^{\square}$: a random $\Omega$-$k$-tree that uniquely corresponds to ${\sf C}_{n,k}$ under the bijection $\varphi$. This is equivalent to uniformly choose a random $\Omega$-$k$-tree from the class $\CMcal{G}_{n,k}^{\square}$.
\end{enumerate}
\subsection{A one-to-one correspondence $\varphi$}\label{ss:enum}
We recall that a rooted $\Omega$-$k$-tree is an $\Omega$-$k$-tree rooted at a front (or equivalently a $k$-clique). For the case $\Omega=\mathbb{N}_0$, we simply call a $(k,\mathbb{N}_0)$-front coding tree a {\em $k$-front coding tree}. By Definition~\ref{D:kcoding}, a $k$-front coding tree is a bipartite tree of black and white nodes which is rooted at a white node and where every black node has precisely $k$ successors. We will present a one-to-one correspondence $$\varphi:\CMcal{G}_{n,k}^{\square}\rightarrow \CMcal{C}_{n,k}$$
when $\Omega=\mathbb{N}_0$, that is, a one-to-one correspondence $\varphi$ between rooted $k$-trees and $k$-front coding trees. The bijection $\varphi$ holds for any $\Omega$-$k$-tree when we specify the outdegrees of the white nodes in the corresponding $(k,\Omega)$-front coding tree.

The correspondence $\varphi$ will be built in a way that black nodes in a $k$-front coding tree correspond to hedra in a $k$-tree. Every black node also gets a label which is equal to the label of one of the vertices of the corresponding hedron. A white node in a $k$-front coding tree corresponds to a front of the $k$-trees and is labelled by the set $\{a_1,a_2,\ldots,a_k\}$ of labels of the corresponding front. A black node connects with a white node if the corresponding hedron contains the corresponding front and the label of the black node is just the label of the vertex that is not contained in the front. Thus, if we start with the root front of the $k$-tree we can recursively build up a corresponding $k$-front coding tree; see Figure~\ref{F:1}.

\begin{figure}[htbp]
\begin{center}
\includegraphics[scale=0.7]{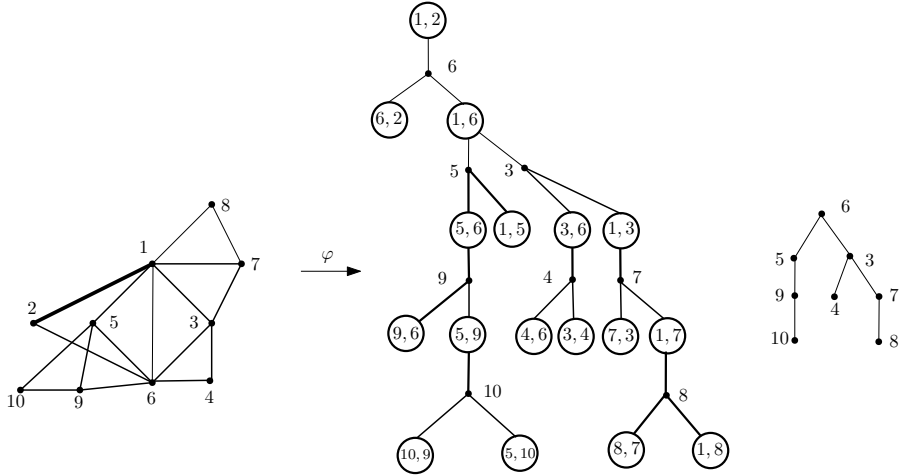}
\caption{When $\Omega=\{0,1,2,3\}$, an $\Omega$-$2$-tree rooted at a front whose vertices are labelled by $1,2$ (left) and the corresponding $(2,\Omega)$-front coding
tree ${\sf C}_{n,2}$ rooted at a white node labelled by $\{1,2\}$ (middle); finally the derived black tree ${\sf T}_{n}$ consists just of only black nodes of ${\sf C}_{n,2}$ (right).
\label{F:1}}
\end{center}
\end{figure}

With the help of this correspondence $\varphi$, the problem of counting the $\Omega$-$k$-trees with $n$ hedra is reduced to count the corresponding $(k,\Omega)$-front coding trees with $n$ black nodes. We use the notation {\it $\circ$-rooted $(k,\Omega)$-front coding trees}
if the white root node has a fixed label and use the notation {\em $\circ-\bullet$ $(k,\Omega)$-front coding tree} if the white root of a reduced $(k,\Omega)$-front coding tree has a fixed label.

Let $\CMcal{G}_k^{\square}$ be the class of $\Omega$-$k$-trees rooted at a fixed front $\{a_1,a_2,\ldots,a_k\}$, furthermore let $\CMcal{C}_k$ be the class of the $(k,\Omega)$-front coding trees and $\CMcal{B}_k$ be the class of $\circ-\bullet$ $(k,\Omega)$-front coding trees. In fact, the correspondence $\varphi$ also establishes the relation $\varphi:\CMcal{G}_k^{\square}\rightarrow\CMcal{C}_k$. Furthermore, every $(k,\Omega)$-front coding tree can be identified as a set of $\circ-\bullet$ $(k,\Omega)$-front coding trees with the outdegree set $\Omega$, which leads to the relation
\begin{align}\label{E:spe1}
\CMcal{C}_k=\textsc{Set}_{\Omega}(\CMcal{B}_k).
\end{align}
In terms of exponential generating functions (where the size is always the number of black nodes), we thus get
\begin{align}\label{E:gespe1}
C_k(x)=\sum_{i\in \Omega}\frac{(B_k(x))^i}{i\,!}.
\end{align}
We recall that ${\sf C}_{n,k}$ is a random $(k,\Omega)$-front coding tree that is uniformly selected from the $(k,\Omega)$-front coding trees of size $n$. We denote by ${\sf L}_{n,k}$ one of the largest $\circ-\bullet$ $(k,\Omega)$-front coding trees that is contained in ${\sf C}_{n,k}$ and denote by $L_{n,k}$ the size of ${\sf L}_{n,k}$. By employing a unified analytic framework given by Xavier Gourdon \cite{G:98}, from (\ref{E:spe1}) we can prove that for any sufficiently small $\varepsilon$ such that $\varepsilon>0$, one has
\begin{align}\label{E:largest}
\mathbb{P}[n-L_{n,k}\ge n^{\varepsilon}]\rightarrow 0.
\end{align}
Now we continue to decompose the $\circ-\bullet$ $(k,\Omega)$-front coding tree. Let $\CMcal{C}_k^{\circ}$ be the class of $\circ$-rooted $(k,\Omega)$-front coding trees that are contained in the $\circ-\bullet$ $(k,\Omega)$-front coding trees. Then every tree from $\CMcal{C}_k^{\circ}$ can be identified as a set of $\circ-\bullet$ $(k,\Omega)$-front coding trees with the outdegree set $\Omega_{\scriptsize{\mbox{out}}}$ of the white nodes where $\Omega_{\scriptsize{\mbox{out}}}=\{i\,\vert\,i+1 \in \Omega,i\ge 0\}$. Moreover, every $\circ-\bullet$ $(k,\Omega)$-front coding tree can be decomposed into a $k$-tuple of $\circ$-rooted $(k,\Omega)$-front coding trees. This yields the following specification:
\begin{eqnarray}\label{E:symlab}
\CMcal{B}_k=\{\bullet\}*\textsc{Seq}_k(\CMcal{C}_k^{\circ}) \quad\,\,\mbox{and}\,\quad\CMcal{C}_k^{\circ}
=\textsc{Set}_{\Omega_{\scriptsize{\mbox{out}}}}
(\CMcal{B}_k).
\end{eqnarray}
In terms of exponential generating functions, one gets
\begin{eqnarray}\label{E:symfun}
B_k(x)=x\cdot C_k^{\circ}(x)^k\quad\,\,\mbox{and}\,\quad C_k^{\circ}(x)=\sum_{\substack{i+1\in\Omega\\i\ge 0}}\frac{(B_k(x))^i}{i\,!}.
\end{eqnarray}
In particular $B_k(x)$ satisfies
\begin{equation}\label{E:Bkn2}
B_k(x)=x(\sum_{\substack{i+1\in\Omega\\i\ge 0}}\frac{(B_k(x))^i}{i\,!})^k.	
\end{equation}
Consequently there exists a unique positive dominant singularity $\rho_{k,\Omega}$ of $B_k(x)$ such that
\begin{align}\label{E:domieq}
\sum_{\substack{i+1\in\Omega\\ i\ge 1}}\frac{(ki-1)}{i\,!}
(B_k(\rho_{k,\Omega}))^i=1\,\quad\mbox{ and }\quad\, B_k(\rho_{k,\Omega})<\infty.
\end{align}
It follows immediately from (\ref{E:gespe1}) and (\ref{E:symfun}) that $C_k(\rho_{k,\Omega})<\infty$ and $C_k^{\circ}(\rho_{k,\Omega})<\infty$. We set $b_{k,\Omega}(n)=n![x^n]B_k(x)$ and $c_{k,\Omega}(n)=n![x^n]C_k(x)$ which counts the number of $\circ$-rooted $(k,\Omega)$-front coding trees of $n$ black nodes and the root $\circ$ has a fixed label $\{a_1,a_2,\ldots,a_k\}$. Since there are ${n+k \choose k}$ ways to choose the root $\{a_1,a_2,\ldots,a_k\}$, the number of $\Omega$-$k$-trees having $n$ hedra that are rooted at a front is
\begin{eqnarray}\label{E:count}
(kn+1){\sf Par}_{k,\Omega}(n)={n+k\choose k}c_{k,\Omega}(n),
\end{eqnarray}
and ${\sf Par}_{k,\Omega}(n)$ can be derived from (\ref{E:count}) for any specific $\Omega$. One can analyze the asymptotic behaviors of $b_{k,\Omega}(n)$ and $c_{k,\Omega}(n)$ from (\ref{E:Bkn2}); see \cite{Drmotabook,FS}, which yields
\begin{align}
\label{eq:required}
b_{k,\Omega}(n)\sim d_1 n^{-3/2}n!(\rho_{k,\Omega})^{-n}\quad\mbox{ and }\quad c_{k,\Omega}(n)\sim d_2 n^{-3/2}n!(\rho_{k,\Omega})^{-n}
\end{align}
for some positive constants $d_1,d_2$. Together with (\ref{E:count}) this leads to
\begin{align*}
{\sf Par}_{k,\Omega}(n)\sim \frac{d_2\,n^{n+k-2}}{k\cdot k!}(\rho_{k,\Omega})^{-n}.
\end{align*}
Furthermore, one can also estimate the number ${\sf U}_{k,\Omega}(n)$ of unlabeled $\Omega$-$k$-trees.
\begin{align*}
{\sf U}_{k,\Omega}(n)\sim d_3 n^{-5/2} (\tau_{k,\Omega})^{-n}
\end{align*}
where $d_3$ is a positive constant and $\tau_{k,\Omega}$ is the dominant singularity of $A_k(z)$ that is given by
\begin{align}\label{E:1}
A_k(z)=z\sum_{k\in\Omega}\sum_{\lambda\vdash k}\frac{(kA_k(z))^{\lambda_1}}{\lambda_1!}
\frac{(kA_k(z^2))^{\lambda_2}}{\lambda_2!2^{\lambda_2}}\cdots
\frac{(kA_k(z^k))^{\lambda_k}}{\lambda_k!k^{\lambda_k}},
\end{align}
in which $\lambda\vdash k$ is a partition of $k$ and by $\lambda_i$ we denote the number of parts in $\lambda$ with length $i$. The dominant singularity $z=\tau_{k,\Omega}$ is the unique solution of (\ref{E:1}) and
\begin{align*}
\frac{1}{k}=z\sum_{k\in\Omega}\sum_{\substack{\lambda\vdash k\\
\lambda_1\ge 1}}\frac{(kA_k(z))^{\lambda_1-1}}{(\lambda_1-1)!}
\frac{(kA_k(z^2))^{\lambda_2}}{\lambda_2!2^{\lambda_2}}\cdots
\frac{(kA_k(z^k))^{\lambda_k}}{\lambda_k!k^{\lambda_k}}.
\end{align*}
For the case $\Omega=\mathbb{N}_0$, the number ${\sf U}_{k,\mathbb{N}_0}(n)$ of unlabeled $k$-trees is estimated in \cite{DJ:14}.
\begin{remark}
If $\Omega=\mathbb{N}_0$, it was shown in \cite{BP:69,Moon,Foata,DS:09} that the number ${\sf Par}_{k,\mathbb{N}_0}(n)$ of $\mathbb{N}_0$- $k$-trees having $n$ hedra is given by
\begin{equation}\label{E:Bkn}
{\sf Par}_{k,\mathbb{N}_0}(n)={n+k\choose k}(kn+1)^{n-2},
\end{equation}
thus, asymptotically by ${\sf Par}_{k,\mathbb{N}_0}(n)\sim {n^k}(kn)^{n-2}e^{1/k}(k!)^{-1}$ as $n\to\infty$. By applying the Lagrange inversion formula on (\ref{E:Bkn2}) for the case $\Omega=\mathbb{N}_0$, we obtain that the number of $\circ-\bullet$ $(k,\mathbb{N}_0)$-front coding trees with $n$ black nodes where the root $\circ$ has a fixed label $\{a_1,a_2,\ldots,a_k\}$, is
\begin{eqnarray}\label{E:enuB}
b_{k,\mathbb{N}_0}(n)=n!\,[x^n]B_k(x)=(n-1)![x^{n-1}]\exp(knx)=(kn)^{n-1}
\end{eqnarray}
and the number of $\circ$-rooted $(k,\mathbb{N}_0)$-front coding trees with $n$ black nodes where the root $\circ$ has a fixed label $\{a_1,a_2,\ldots,a_k\}$ is
\begin{eqnarray}\label{E:enuC}
c_{k,\mathbb{N}_0}(n)=n!\,[x^n]C_k(x)=(n-1)![x^{n-1}]\exp((kn+1)x)
=(kn+1)^{n-1}.
\end{eqnarray}
In view of (\ref{E:enuC}), the closed formula (\ref{E:Bkn}) for ${\sf Par}_{k,\mathbb{N}_0}(n)$ is proved. It follows from (\ref{E:Bkn2}) that the dominant singularity of $B_k(x)$ for the case $\Omega=\mathbb{N}_0$ is $\rho_{k,\mathbb{N}_0}=(ek)^{-1}$ and $B_k(\rho_{k,\mathbb{N}_0})=k^{-1}$; see \cite{DS:09,DJ:14} for details.
\end{remark}
\subsection{Reduction of Theorem~\ref{T:crtk}}\label{ss:red}
We reduce Theorem~\ref{T:crtk} to the scaling limit of a random rooted $\Omega$-$k$-trees where the root front has vertices labelled by $1,2,\ldots,k$.

Since any $\Omega$-$k$-tree with $n$ hedra has the same number, $ (kn+1)$, of fronts, it makes no difference whether we root ${\sf G}_{n,k}$ at a uniformly at random chosen front, or if we select an element from the class $\CMcal{G}_{n,k}^\circ$ uniformly at random. From (\ref{E:count}) and the bijection $\varphi$ we find that for all $g\in \CMcal{G}_{n,k}^{\circ}$ and $c\in \CMcal{C}_{n,k}$ we have
$$\mathbb{P}[{\sf G}_{n,k}^{\circ}=g]=\binom{n+k}{k}^{-1}\mathbb{P}[{\sf C}_{n,k}=c]
=\binom{n+k}{k}^{-1}\mathbb{P}[{\sf G}_{n,k}^{\square}=\varphi^{-1}(c)],$$
which means that the probability to uniformly choose a front-rooted $\Omega$-$k$-tree is equal to the probability to first uniformly choose a rooted $\Omega$-$k$-tree from $\CMcal{G}_{n,k}^{\square}$ and then replace the label $\{1,2,\ldots,k\}$ on the root by a uniformly chosen $k$-subset of $[n]$. Since the relabeling will not change the distance of two vertices in the graph and will not change the probability to choose an $\Omega$-$k$-tree of a given shape, without loss of generality we can fix the labeling of the root front and consider the random $\Omega$-$k$-tree ${\sf G}_{n,k}^{\square}$. That is, it suffices to prove Theorem~\ref{T:crtk} for the random $\Omega$-$k$-tree that is uniformly selected from $\CMcal{G}_{n,k}^{\square}$. This is equivalent to uniformly choose a $(k,\Omega)$-front coding tree ${\sf C}_{n,k}$ from $\CMcal{C}_{n,k}$ and consider the corresponding random $\Omega$-$k$-tree ${\sf G}_{n,k}^{\square}=\varphi^{-1}({\sf C}_{n,k})$. 


We can further reduce Theorem~\ref{T:crtk} to the scaling limit of a random rooted $\Omega$-$k$-tree such that the root front is contained in only one hedron. That is, a random rooted $\Omega$-$k$-tree that uniquely corresponds to a $\circ-\bullet$ $(k,\Omega)$-coding tree from $\CMcal{B}_k$. We put this in Section~\ref{S:proof} after we introduce the Gromov-Hausdorff metric in subsection~\ref{ss:GH}.

Since $\CMcal{B}_k$ has a proper recursive specification (\ref{E:symfun}), these random objects can be constructed (or sampled) by a so-called Boltzmann sampler $\Gamma B_k(x)$.
\subsection{Boltzmann Sampler}
Boltzmann samplers provide a way to efficiently generate a combinatorial object at random. They were introduced by Duchon, Flajolet, Louchard and Schaeffer \cite{DF:04} and were further developed by Flajolet, Fusy and Pivoteau \cite{FFP:07}. Here we refer the readers to their papers \cite{DF:04,FFP:07} for a detailed description of the Boltzmann samplers.
We just mention that the Boltzmann sampler $\Gamma M(x)$ is a random generator which chooses an
object $c\in \CMcal{M}$ with probability $\mathbb{P}(\Gamma M(x)=c)=x^{\vert c\vert}/(M(x)\vert c\vert!)$, where
$M(x)$ denotes the exponential generating function of $c\in \CMcal{M}$ and the parameter $x>0$ ist such that $0<M(x)<\infty$.
An important property of Boltzmann samplers is that they generate objects conditioned on output size $n$ uniformly.

More precisely we will describe a  Boltzmann sampler $\Gamma B_k(x)$ with parameter $x=\rho_{k,\Omega}$
(which is possible since $B_k(\rho_{k,\Omega})<\infty$). We denote by $\xi_{\circ}$ the random variable with probability distribution
\begin{align}\label{E:bullet}
\mathbb{P}[\xi_{\circ}=i]&=\frac{1}{C_k^{\circ}(\rho_{k,\Omega})}\frac{(B_k(\rho_{k,\Omega}))^i}
{i\,!}
\mbox{ if }\, i\in\Omega_{\scriptsize{\mbox{out}}}\, \quad\mbox{ and }\, \quad \mathbb{P}[\xi_{\circ}=i]=0\,\mbox{ otherwise}.
\end{align}
\begin{lemma}\label{L:Boltz}
The following recursive procedure $\Gamma B_k(\rho_{k,\Omega})$ terminates almost surely and draws a random $\circ-\bullet$ $(k,\Omega)$-front coding tree according to the Boltzmann distribution with parameter $\rho_{k,\Omega}$, i.e., any $\circ-\bullet$ $(k,\Omega)$-front coding tree of size $n$ is drawn with probability $\rho_{k,\Omega}^n/(n!\,B_k(\rho_{k,\Omega}))$.
\end{lemma}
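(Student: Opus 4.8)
The plan is to unwind the recursive definition of the sampler $\Gamma B_k(x)$ forced by the specification \eqref{E:symfun}. Concretely, $\Gamma B_k(x)$ emits a black root and then, for the factor $\textsc{Seq}_k(\CMcal{C}_k^\circ)$, runs $k$ independent copies of the sampler $\Gamma C_k^\circ(x)$, one per white child; in turn $\Gamma C_k^\circ(x)$, for the restricted set $\textsc{Set}_{\Omega_{\mathrm{out}}}(\CMcal{B}_k)$, emits a white node, draws a number $i$ of black children with law $\xi_\circ$ as in \eqref{E:bullet}, attaches $i$ independent (unordered) copies of $\Gamma B_k(x)$, and finally the black nodes are assigned labels (uniformly over admissible labellings, as in the labelled product). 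Taking $x=\rho_{k,\Omega}$ is legitimate because $B_k(\rho_{k,\Omega})<\infty$ and $C_k^\circ(\rho_{k,\Omega})<\infty$ by \eqref{E:domieq} and the remark following it, so \eqref{E:bullet} genuinely defines a probability distribution.

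To verify the Boltzmann property I would argue by structural induction on a fixed finite $\circ-\bullet$ coding tree $\beta$, carried out simultaneously with the companion statement that $\Gamma C_k^\circ(\rho_{k,\Omega})$ returns a given $\circ$-rooted coding tree $\gamma$ with $m$ black nodes with probability $\rho_{k,\Omega}^m/(m!\,C_k^\circ(\rho_{k,\Omega}))$. Peeling off one layer of \eqref{E:symfun}: writing $\beta$ as a black node bearing the $k$ white subtrees $\gamma_1,\dots,\gamma_k$, the admissibility of the labelled product and of $\textsc{Seq}_k$ expresses $\mathbb{P}[\Gamma B_k(\rho)=\beta]$ through $\prod_{j=1}^{k}\mathbb{P}[\Gamma C_k^\circ(\rho)=\gamma_j]$ and the label-merging coefficients, and the identity $B_k(\rho)=\rho\,C_k^\circ(\rho)^k$ makes everything collapse to $\rho^{|\beta|}/(|\beta|!\,B_k(\rho))$; dually, writing $\gamma$ as a white node with an unordered family $\delta_1,\dots,\delta_i\in\CMcal{B}_k$ of black subtrees, admissibility of the restricted set operator --- whose component-count law is exactly $\xi_\circ$ --- together with $C_k^\circ(\rho)=\sum_i B_k(\rho)^i/i!$ collapses $\mathbb{P}[\Gamma C_k^\circ(\rho)=\gamma]$ to $\rho^{|\gamma|}/(|\gamma|!\,C_k^\circ(\rho))$. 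Since $\beta$ is finite, this recursion unfolds in finitely many steps, so each of these probabilities is an honest finite product and there is no circularity; in essence this is just the standard admissibility of Boltzmann samplers under $*$, $\textsc{Seq}_k$ and restricted $\textsc{Set}$, applied to the two mutually recursive classes $\CMcal{B}_k$ and $\CMcal{C}_k^\circ$.

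Almost sure termination then comes for free: having pinned down the probability of every finite output, the probability that $\Gamma B_k(\rho_{k,\Omega})$ halts equals
\[
\sum_{\beta\in\CMcal{B}_k}\mathbb{P}[\Gamma B_k(\rho_{k,\Omega})=\beta]
=\frac{1}{B_k(\rho_{k,\Omega})}\sum_{n\ge 1}b_{k,\Omega}(n)\,\frac{\rho_{k,\Omega}^{\,n}}{n!}
=\frac{B_k(\rho_{k,\Omega})}{B_k(\rho_{k,\Omega})}=1,
\]
where the middle step groups the $\circ-\bullet$ coding trees by their number $n$ of black nodes and the last step uses that $B_k$ has nonnegative coefficients, so that by Abel's theorem $\sum_{n\ge1}b_{k,\Omega}(n)\rho_{k,\Omega}^n/n!=\lim_{x\uparrow\rho_{k,\Omega}}B_k(x)=B_k(\rho_{k,\Omega})<\infty$ by \eqref{E:domieq}. (Alternatively one could invoke that the multitype branching process recording the recursive calls is at most critical at its radius of convergence $\rho_{k,\Omega}$ and hence goes extinct a.s., but the computation above is self-contained.) The only delicate point in the whole argument is the bookkeeping in the induction step --- keeping track of the $1/i!$ weights in the restricted $\textsc{Set}$, the label-merging factors, and the ordered-versus-unordered distinction, and checking that the chosen laws ($\xi_\circ$ for black children, deterministic length $k$ for white children) are precisely those that cancel all of these against the powers of $\rho_{k,\Omega}$, $B_k(\rho_{k,\Omega})$ and $C_k^\circ(\rho_{k,\Omega})$.
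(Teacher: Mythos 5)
Your proposal is correct, and it follows essentially the same route the paper takes: the paper states Lemma~\ref{L:Boltz} without a written-out proof, relying (as Remark~\ref{re:boaut} indicates) on the standard admissibility rules for compiling Boltzmann samplers from specifications built with $*$, $\textsc{Seq}_k$ and restricted $\textsc{Set}$ as in \cite{DF:04,FFP:07}, applied to the mutually recursive system \eqref{E:symfun}. Your structural induction together with the identities $B_k(\rho)=\rho\,C_k^\circ(\rho)^k$ and $C_k^\circ(\rho)=\sum_i B_k(\rho)^i/i!$, and the termination argument via $\sum_n b_{k,\Omega}(n)\rho_{k,\Omega}^n/n!=B_k(\rho_{k,\Omega})<\infty$ from \eqref{E:domieq}, is exactly the justification of those rules in this instance.
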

\begin{tabbing}
\rule{0cm}{0mm}$\Gamma B_k(\rho_{k, \Omega})$: $x_1\leftarrow$ a black node $\bullet$\\
\rule{1.5cm}{0mm}for $i:=1\rightarrow k$\\
\rule{2.0cm}{0mm}$x_2\leftarrow$ a single white node $\circ$\\
\rule{2.0cm}{0mm}merge $x_2$ into $x_1$ by adding an edge $\bullet-\circ$\\
\rule{2.0cm}{0mm}$m\leftarrow \xi_{\circ}$ and $m\in\Omega_{\scriptsize{\mbox{out}}}$ \\
\rule{2.0cm}{0mm}\=$\CMcal{F}\leftarrow$ an $m$-tuple $(\Gamma B_k(\rho_{k,\Omega}),\ldots,\Gamma B_k(\rho_{k,\Omega}))$, \\
\rule{2.0cm}{0mm}\=drop the labels\\
\rule{2.0cm}{0mm}\=merge $\CMcal{F}$ into $x_1$ by connecting $x_2$ to the roots of $\CMcal{F}$\\
\rule{1.5cm}{0mm}$x_1\leftarrow$ label the black nodes of $x_1$ uniformly at random\\
\rule{1.5cm}{0mm}return $x_1$
\end{tabbing}
\begin{remark}
	\label{re:boaut}
Boltzmann sampler can be compiled automatically from combinatorial specifications. In the present case of $\Omega$-$k$-trees, the specification given in (\ref{E:symlab}) involves product $*$ and $\textsc{Set}_{\Omega_{\scriptsize{\mbox{out}}}}$, consequently we need the rules of $\textsc{Seq}_k$ and $\textsc{Set}_{\Omega_{\scriptsize{\mbox{out}}}}$ for the inductive construction of Boltzmann sampler $\Gamma F(x)$ and $\Gamma C_k^{\circ}(x)$, which are
\begin{center}
  \begin{tabular}{c l}
  \hline\hline
    \mbox{Construction} & \mbox{Generator}\\
    \hline
    $\CMcal{F}=\textsc{Seq}_k(\CMcal{C}_k^{\circ})$ & return the $k$-tuple $(\Gamma C_k^{\circ}(x),\cdots,\Gamma C_k^{\circ}(x))$  relabeled uniformly at random.\\
    $\CMcal{C}_k^{\circ}=\textsc{Set}_{\Omega_{\scriptsize{\mbox{out}}}}(\CMcal{B}_k)$ & $m\leftarrow\xi_{\circ}$ and $m\in \Omega_{\scriptsize{\mbox{out}}}$, return the $m$-tuple $(\Gamma B_k(x),\ldots,\Gamma B_k(x))$ \\ & relabeled uniformly at random.\\
    \hline\hline
  \end{tabular}
\end{center}
For the case $\Omega=\mathbb{N}_0$, we have $\CMcal{C}_k=\CMcal{C}^{\circ}_k=\textsc{Set}(\CMcal{B}_k)$ and from (\ref{E:bullet}) it follows that $\xi_{\circ}$ is Poisson distributed with parameter $B_k(\rho_{k,\mathbb{N}_0})=k^{-1}$ where $\rho_{k,\mathbb{N}_0}=(ek)^{-1}$.
\end{remark}
Note that $(k,\Omega)$-front coding trees satisfy the specification (\ref{E:symfun}), but they do not represent the distance relation in the $\Omega$-$k$-trees; see Figure~\ref{F:1}. Since we have fixed the label on the white root $\circ$, which is $\{1,2,\ldots,k\}$, the labels on the black nodes of $\Gamma B_k(\rho_{k,\Omega})$ determine the corresponding labels on the other white nodes.
\subsection{$\Omega$-$k$-tree distance algorithm}\label{ss:kdis}
For a random $(k,\Omega)$-front coding tree ${\sf C}_{n,k}$, ${\sf G}_{n,k}^{\square}$ is
the corresponding $\Omega$-$k$-tree under the bijection $\varphi^{-1}:\CMcal{C}_{n,k}\rightarrow \CMcal{G}_{n,k}^{\square}$ in subsection~\ref{ss:enum}. So ${\sf G}_{n,k}^{\square}$ is rooted at the front $\{1,2,\ldots,k\}$.

We use the notation $(i^m,j^{k-m})$ to represent the sequence of length $k$ that has $m$ occurrences of $i$ and $(k-m)$ occurrences of $j$.
Here we shall consider the distances to the vertex $1$ in an $\Omega$-$k$-tree ${\sf G}_{n,k}^{\square}$. Darrasse and Soria \cite{DS:09} provided an algorithm to calculate the distances to the vertex $1$ in an $\Omega$-$k$-tree ${\sf G}_{n,k}^{\square}$ by marking the distances on the corresponding $(k,\Omega)$-front coding tree ${\sf C}_{n,k}$, which is similar to the algorithm given by Proskurowski in \cite{Pros}. Note that every black node of the $(k,\Omega)$-front coding tree is related to a vertex of the corresponding $\Omega$-$k$-tree via its label, and the vertices that label a white node of the $(k,\Omega)$-front tree represent $k$ vertices that constitute a front of the corresponding $\Omega$-$k$-tree. We recall Darrasse and Soria's algorithm.
\begin{tabbing}
{\bf Algorithm $1$}: Distances in an $\Omega$-$k$-tree\\
\rule{1cm}{0mm}Input: a $(k,\Omega)$-front coding tree ${\sf C}$ and \\
\rule{2.1cm}{0mm}a sequence $(a_i)_{i=1}^k=(0,1^{k-1})$\\
\rule{1cm}{0mm}Output: an association table (vertex, distance)\\
\rule{1cm}{0mm}$p:=\min\{a_i\}_{i=1}^k+1$ and $A=\varnothing$\\
\rule{1cm}{0mm}for all sons $v$ of the root ${\sf C}$ do\\
\rule{1.5cm}{0mm}$A:=A\cup \{(v,p)\}$\\
\rule{1.5cm}{0mm}for $i:=1\rightarrow k$ do\\
\rule{1.5cm}{0mm}\=$A\leftarrow A\,\cup$ the recursive call on the $i$-th son \\
\rule{2.4cm}{0mm}\= of $v$ and $(a_1,\ldots,a_{i-1},p,a_{i+1},\ldots,a_k)$\\
\rule{1cm}{0mm}return $A$
\end{tabbing}
If we implement this algorithm on the $(2,\Omega)$-front coding tree (middle) in Figure~\ref{F:1}, we get a distance table marked on every black node in Figure~\ref{F:2}. The distance sequences on the white nodes help us to recursively mark the distances on the black nodes.
\begin{remark}
Based on this distance algorithm, Darrasse and Soria used the generating function approach to show a Rayleigh limiting distribution for the expected distances between pairs of vertices in a random $k$-tree; see \cite{DS:09}.
\end{remark}
\begin{figure}[htbp]
\begin{center}
\includegraphics[scale=0.7]{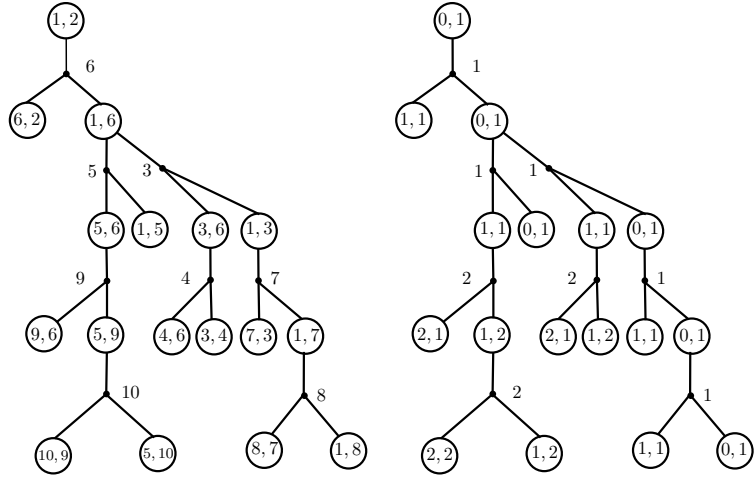}
\caption{When $\Omega=\{0,1,2,3\}$, a $(2,\Omega)$-front coding tree (left) and the corresponding distance table on every black node (right).
\label{F:2}}
\end{center}
\end{figure}
\subsection{Gromov-Hausdorff convergence and the CRT}\label{ss:GH}
Let ${\sf e}=({\sf e}_t)_{0\le t\le 1}$ denote the {\em Brownian excursion of duration one}. Then this (random) continuous function ${\sf e}$ induces a pseudo-metric on the interval $[0,1]$ by
\begin{eqnarray*}
d_{{\sf e}}(u,v)={\sf e}(u)+{\sf e}(v)-2\inf_{u\le s\le v} {\sf e}(s)
\end{eqnarray*}
for $u\le v$. This defines a metric on the quotient $\CMcal{T}_{{\sf e}}=[0,1]/\!\!\sim$ where $u\sim v$ if and only if $d_{{\sf e}}(u,v)=0$. The corresponding random pointed metric space $(\CMcal{T}_{{\sf e}}, d_{{\sf e}},r_0(\CMcal{T}_{{\sf e}}))$, where $r_0(\CMcal{T}_{{\sf e}})$ is the equivalence class of the origin, is the {\it Continuum Random Tree} (CRT). We will simply use $\CMcal{T}_{{\sf e}}$ to denote the CRT. Recall that the isometry classes of (pointed) compact metric spaces $\mathbb{K}(\mathbb{K}^{\bullet})$, where a pointed compact space is a triple $(X,d,r)$, where $(X,d)$ is a metric space and $r\in X$ is a distinguished element, constitute a Polish space with respect to the (pointed) {\em Gromov-Hausdorff metric} $d_{\scriptsize{\mbox{GH}}}$.

We shall briefly introduce the Gromov-Hausdorff metric and refer the readers to \cite{BBI:01,E:05} for a full description of this metric. Given two compact metric spaces $(X,d_2)$ and $(Y,d_2)$, a {\it correspondence} between $X$ and $Y$ is a subset $R\subset X\times Y$ such that for any $x\in X$, there is a $y\in Y$ with $(x,y)\in R$ and conversely for any $y\in Y$, there is an $x\in X$ with $(x,y)\in R$. The {\it distortion} of the correspondence is defined as follows:
\begin{align}\label{E:dis1}
\mbox{dis}(R)=\sup\{\vert d_1(x_1,x_2)-d_2(y_1,y_2)\vert:(x_1,y_1),(x_2,y_2)\in R\}.
\end{align}
Given two pointed compact metric spaces $(X,d_1,r_1)$ and $(Y,d_2,r_2)$, we define the
Gromov-Hausdorff distance between the pointed compact metric spaces $(X,d_1,r_1)$ and $(Y,d_2,r_2)$ by
\begin{align}\label{E:GH}
d_{\mbox{\scriptsize{GH}}}((X,d_1,r_1),(Y,d_2,r_2))=\frac{1}{2}\inf_{R}\mbox{dis}(R)
\end{align}
where $R$ ranges over all correspondences between $X$ and $Y$ such that $r_1$ and $r_2$ corresponds to each other. The Gromov-Hausdorff metric of two compact spaces $(X,d_1)$ and $(Y,d_2)$ is then defined to be (\ref{E:GH}) without $r_1,r_2$.

A pointed metric space $(X, d_1, r_1)$ may be rescaled by multiplying the metric with a positive constant $a$. We shall denote the rescaled space $(X, ad_1, r_1)$ in the following simply by $a X$.

Let $T$ be a Galton-Watson tree, we say $T$ is {\it critical} if the offspring distribution $\xi$ of $T$ satisfies $\mathbb{E}\xi=1$. In fact, $T$ is almost surely finite if and only if $\mathbb{E}\xi\le 1$. Let ${\sf supp}(\xi)=\{m\,\vert\,\mathbb{P}(\xi=m)>0\}$
denote the {\it support of} $\xi$ and define the {\it span}, denoted by ${\sf span}(\xi)$, as the greatest common divisor of $\{m\,\vert \,m\in{\sf supp}(\xi)\}$. If a Galton-Watson tree $T$ is finite, then
\begin{align}\label{E:span}
\vert T\vert=1+\sum_{v\in V(T)}d_{T}^{+}(v)\equiv 1\mod {\sf span}(\xi)
\end{align}
where $V(T)$ is the vertex set of $T$ and $d_{T}^{+}(v)$ represents the outdegree of $v$ in $T$. The convergence of a Galton-Watson tree $T_n$ conditioned on size $n$ (properly scaled) to $\CMcal{T}_{{\sf e}}$ is due to Aldous \cite{Ald:3}.
\begin{theorem}\label{T:tran}
Let $T_n$ be a Galton-Watson tree conditioned on having $n$ vertices, where $T_n$ is critical and the offspring distribution $\xi$ of $T_n$ has finite variance $\mathbb{V}\mbox{ar}\,\xi=\sigma^2$. As $n$ tends to infinity,  $T_n$ with edges rescaled to length $\sigma/(2\sqrt{n})$ converges in distribution to the CRT, i.e.,
\begin{eqnarray*}\label{E:crt}
\frac{\sigma}{2\sqrt{n}}T_n\xrightarrow{d} \CMcal{T}_{{\sf e}}
\quad\,\mbox{in the metric space }\,(\mathbb{K}^{\bullet},d_{\scriptsize{\mbox{GH}}}).
\end{eqnarray*}
\end{theorem}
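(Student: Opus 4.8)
This is a classical theorem of Aldous \cite{Ald:3}; the plan is to recall the standard proof via walk encodings, which also makes transparent where the normalizing constant $\sigma/(2\sqrt n)$ comes from. First I would encode $T_n$ by its depth-first (\L ukasiewicz) walk: list the vertices $v_0,v_1,\dots,v_{n-1}$ of $T_n$ in depth-first order and set $W_0=0$, $W_{j+1}=W_j+d_{T_n}^{+}(v_j)-1$, so that $W_n=-1$ and $W_j\ge 0$ for $j<n$. The well-known fact is that, when $T_n$ is a critical Galton--Watson tree conditioned on having $n$ vertices, the path $(W_j)_{0\le j\le n}$ is distributed as a centered random walk with step law $\xi-1$ (mean $0$, variance $\sigma^2$) conditioned to first hit $-1$ at time $n$; if ${\sf span}(\xi)=d>1$ one restricts to $n\equiv 1\bmod d$, which is exactly the constraint \eqref{E:span}.

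Next I would pass to the scaling limit of this walk. Donsker's invariance principle gives $n^{-1/2}W_{\lfloor nt\rfloor}\to\sigma B_t$ for a standard Brownian motion $B$; a local limit theorem for the step distribution upgrades this to convergence of the corresponding \emph{bridge} toward the Brownian bridge, and then the Vervaat transform (cyclically shifting the bridge at its overall minimum) yields that the conditioned walk, rescaled by $n^{-1/2}$ in space and $n^{-1}$ in time, converges in distribution in $C[0,1]$ to $\sigma\,{\sf e}$, where ${\sf e}$ is the normalized Brownian excursion.

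The third step is to transfer this to the \emph{height process} $(H_j)_{0\le j\le n}$ of $T_n$, where $H_j$ is the graph distance from $v_j$ to the root. There is an explicit combinatorial identity
\[
H_j=\#\bigl\{\, i\in\{0,\dots,j-1\}\ :\ W_i=\textstyle\min_{i\le \ell\le j}W_\ell \,\bigr\},
\]
and a maximal-inequality/ergodic argument — due to Aldous, and subsumed by the Le Gall--Le Jan theory of height processes, cf.\ \cite{Dus:1} — shows that, jointly with the convergence of the walk, one has
\[
\Bigl(n^{-1/2}H_{\lfloor nt\rfloor}\Bigr)_{0\le t\le1}\ \xrightarrow{d}\ \Bigl(\tfrac{2}{\sigma}\,{\sf e}_t\Bigr)_{0\le t\le1}
\]
in $C[0,1]$ (the contour function of $T_n$ obeys the same limit after the routine time change relating it to $H$). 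Finally I would read off the Gromov--Hausdorff statement: the pointed metric space $(T_n,d_{T_n},\text{root})$ is isometric to the pointed compact real tree coded by $H$, and rescaling its metric by $\lambda$ replaces the coding function by $\lambda H$; taking $\lambda=\sigma/(2\sqrt n)$ turns $\lambda H_{\lfloor nt\rfloor}$ into $\tfrac{\sigma}{2}\,n^{-1/2}H_{\lfloor nt\rfloor}$, which by the previous display converges to ${\sf e}_t$. Since the map sending an excursion-type function to the pointed compact real tree it encodes is continuous from $(C[0,1],\|\cdot\|_\infty)$ to $(\mathbb{K}^{\bullet},d_{\scriptsize{\mbox{GH}}})$, this gives $\tfrac{\sigma}{2\sqrt n}T_n\xrightarrow{d}\CMcal{T}_{{\sf e}}$.

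The main obstacle is the third step. The height process $H$ is \emph{not} a continuous functional of the walk $W$ for the uniform topology, so one cannot simply push the walk limit forward; the technical heart is to control the running-minimum record count above, showing that it differs from the (continuous) reflected-walk functional $\tfrac{2}{\sigma^{2}}\bigl(W_j-\min_{i\le j}W_i\bigr)$ by $o(\sqrt n)$ uniformly in $j\le n$. This is precisely the place where the finite-variance hypothesis is used and where the factor $2/\sigma$ (hence the $2$ in $\sigma/(2\sqrt n)$) is produced; everything else in the argument is soft.
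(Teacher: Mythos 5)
The paper does not prove Theorem~\ref{T:tran} at all: it is imported as a black-box result of Aldous \cite{Ald:3}, so there is no in-paper argument to compare yours against. Your outline is the standard modern proof (Aldous's theorem recast via coding functions, in the style later systematized by Duquesne and Le Gall \cite{Dus:1,Dus:2}) and it is correct in its essentials: the reduction to the \L ukasiewicz walk conditioned to first hit $-1$ at time $n$, the conditioned Donsker/Vervaat step yielding $\sigma\,{\sf e}$, the passage to the height process with the factor $2/\sigma^2$, and the continuity of the map from excursion functions to pointed compact real trees in the Gromov--Hausdorff topology are exactly the right ingredients, and you correctly isolate the height-process estimate as the one genuinely technical point. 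Two minor caveats. First, $(T_n,d_{T_n},\mathrm{root})$ is not literally isometric to the real tree coded by the (interpolated) height or contour function; the Gromov--Hausdorff distance between the two is $O(1)$, which is harmless only because it is annihilated by the $n^{-1/2}$ rescaling, and that should be said explicitly. Second, your third step is described rather than executed, so what you have is an accurate roadmap of the known proof rather than a self-contained argument --- which, to be fair, is precisely the status the theorem has in the paper itself.
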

The Galton-Watson tree conditioned on having $n$ vertices is also called the {\em conditioned Galton-Watson tree}. The conditioned Galton-Watson trees are essentially the same as the random {\em simply generated trees}; see \cite{De:98,Drmotabook}.
\subsection{Local convergence}
\label{ss:loco}
Let $\mathcal{X}$ denote the collection of rooted graphs that are connected and locally finite. Given two rooted graphs $G^{*}=(G,v_G)$ and $H^{*}=(H,v_H)$ from $\mathcal{X}$, we define
the distance \[{\sf d}(G^*, H^*)=2^{-\sup \{m\in \mathbb{N}_0 \,\mid\, U_m(G^*) \simeq U_m(H^*)\}}\]
where  $U_m(G^*)$ denotes the rooted subgraph of $G$ induced by all vertices with graph-distance at most $m$ from the root-vertex $v_G$, and $U_m(G^*) \simeq U_m(H^*)$ represents that the two subgraphs are isomorphic as rooted graphs.
The distance ${\sf d}$ satisfies the axioms of a premetric and two elements from $\mathcal{X}$ have distance zero from each other if and only if they are isomorphic as rooted graphs. Hence ${\sf d}$ defines a complete and separable metric on the collection of all isomorphism classes of graphs from $\mathcal{X}$ \cite{BBI:01,E:05}. 

A random rooted graph ${\sf G}_n^{*}=({\sf G}_n,v_n)$ from $\mathcal{X}$ converges in the local weak sense toward a random element from ${\sf G}_{\infty}^{*}=({\sf G}_{\infty},v_{\infty})$, denoted by
$$({\sf G}_n,v_n)\xrightarrow{d} ({\sf G}_{\infty},v_{\infty}),$$
if the corresponding isomorphism classes converge weakly with respect to this metric. This is equivalent to requiring that for all fixed positive number $r$, and for all rooted graphs $(G,v)$ it holds that
\begin{align}\label{E:localdef}
\lim_{n\rightarrow\infty}\mathbb{P}[U_r({\sf G}_n,v_n)\simeq (G,v)]=\mathbb{P}[U_r({\sf G}_{\infty},v_{\infty})\simeq (G,v)].
\end{align}

\section{Proof of Theorem~\ref{T:crtk}}\label{S:proof}
We recall that ${\sf C}_{n,k}$ is a random $(k,\Omega)$-front coding tree of size $n$ that is uniformly selected from the class $\CMcal{C}_{n,k}$ and the size $L_{n,k}$ of the largest $\circ-\bullet$ $(k,\Omega)$-front coding tree in ${\sf C}_{n,k}$ satisfies (\ref{E:largest}). This implies that the Gromov-Hausdorff distance between ${\sf C}_{n,k}$ and ${\sf L}_{n,k}$ is bounded by $n^{\varepsilon}$ with high probability. If we choose $\varepsilon=1/4$, it follows that
\begin{align}\label{E:ghbc}
d_{\scriptsize{\mbox{GH}}}({\sf L}_{n,k}n^{-1/2},{\sf C}_{n,k}n^{-1/2})\xrightarrow{p} 0.
\end{align}
Let ${\sf B}_{n,k}$ denote a random $\circ-\bullet$ $(k,\Omega)$-coding tree that is uniformly chosen from all the $\circ-\bullet$ $(k,\Omega)$-coding trees of size $n$, so in order to establish the convergence of rescaled ${\sf C}_{n,k}$ to $\CMcal{T}_e$, from (\ref{E:ghbc}) it suffices to show that for the rescaled ${\sf B}_{n,k}$.

First we generate the random $\circ-\bullet$ $(k,\Omega)$-front coding tree by the Boltzmann sampler $\Gamma B_k(\rho_{k,\Omega})$. 
Let ${\sf T}_{n}$ be the {\it black tree} obtained from ${\sf B}_{n,k}$ by replacing every edge $\bullet-\circ-\bullet$ by an edge $\bullet-\bullet$ which keeps the labels on the black nodes, consequently black trees are in bijection with $\circ-\bullet$ $(k,\Omega)$-front coding trees; see Figure~\ref{F:1}.

From the construction of the Boltzmann sampler $\Gamma B_k(\rho_{k,\Omega})$, it is clear that any black node has $k$ white children and the number of black children $\xi_{\circ}$ of the white node in ${\sf B}_{n,k}$ follows the probability distribution (\ref{E:bullet}). This implies, the black grandchildren $\xi_{\bullet}$ of any black node has the probability distribution
\begin{align}\label{E:md1}
\mathbb{P}[\xi_{\bullet}=i]=\mathbb{P}[\sum_{j=1}^k\xi_{\circ,j}=i]\quad\mbox{ and }\quad\,
\xi_{\circ,j}\stackrel{d}{=}\xi_{\circ}.
\end{align}
Furthermore, (\ref{E:md1}) is exactly the offspring distribution of the black tree ${\sf T}_{n}$, thus from (\ref{E:domieq}) we know that $\mathbb{E}\,\xi_{\bullet}=k\mathbb{E}\,\xi_{\circ}=1$ and ${\sf T}_{n}$ is a critical Galton-Watson tree with span $\gcd(\Omega_{\scriptsize{\mbox{out}}})$ where $\gcd(\Omega_{\scriptsize{\mbox{out}}})$ denotes the greatest common divisor of the integers in $\Omega_{\scriptsize{\mbox{out}}}$.

We denote by ${\sf G}_{n,k}^{\bullet}$ the $\Omega$-$k$-tree that corresponds to the random $(k,\Omega)$-coding tree ${\sf B}_{n,k}$ under the bijection $\varphi^{-1}:{\sf B}_{n,k}\mapsto{\sf G}_{n,k}^{\bullet}$. For any two black nodes $x,y$ in ${\sf B}_{n,k}$, we set $d_{{\sf B}_{n,k}}(x,y)={\rm dist}_{{\sf T}_{n}}(x,y)$, where ${\rm dist}$ denotes the usual graph theoretical distance. For the case $k\ne 1$, the distance $d_{{\sf B}_{n,k}}(x,y)$ of two black nodes $x,y$ in ${\sf B}_{n,k}$ is different from the distance ${\rm dist}_{{\sf G}_{n,k}^{\bullet}}(x,y)$ of $x,y$ in the original $\Omega$-$k$-tree ${\sf G}_{n,k}^{\bullet}$. In order to represent the distances ${\rm dist}_{{\sf G}_{n,k}^{\bullet}}(x,y)$ for any two black nodes $x,y$ in the tree ${\sf B}_{n,k}$, we need to decompose ${\sf B}_{n,k}$ into {\it  blocks} according to the distance table from Algorithm $1$. We implement the Algorithm $1$ on the random tree ${\sf B}_{n,k}$ to have every black node marked with a distance and every white node marked with a distance sequence. For this random tree ${\sf B}_{n,k}$, denote by
${\sf B}_{i,n,k}$ a subtree of ${\sf B}_{n,k}$ that we call  {\it an $i$-block}\,:
\begin{enumerate}
\item ${\sf B}_{1,n,k}$ is rooted at the root and is induced by the root and all the black nodes that are in distance one to the vertex $1$.
\item ${\sf B}_{i,n,k}$, $i\ge 2$, is rooted at a white node with distance sequence $((i-1)^{k})$ and is induced by this node and all its black descendants that have distance $i$ to the vertex $1$.
\end{enumerate}
By construction, there is only one subtree ${\sf B}_{1,n,k}$ in ${\sf B}_{n,k}$, but there could be many subtrees ${\sf B}_{i,n,k}$ of ${\sf B}_{n,k}$ for $i\ne 1$; see Figure~\ref{F:3}.
For any two black nodes $x,y$ in ${\sf B}_{n,k}$, let $\delta_{{\sf B}_{n,k}}(x,y)=a-1$ where $a$ is the minimal number of blocks necessary to cover the path connecting $x$ and $y$. In particular if $x,y$ are in the same block of ${\sf B}_{n,k}$, then $\delta_{{\sf B}_{n,k}}(x,y)=0$. The following lemma will show that, for any two black nodes $x,y$, the distance ${\rm dist}_{{\sf G}_{n,k}^{\bullet}}(x,y)$ is almost the same as the block-distance $\delta_{{\sf B}_{n,k}}(x,y)$.
\begin{figure}[htbp]
\begin{center}
\includegraphics[scale=0.7]{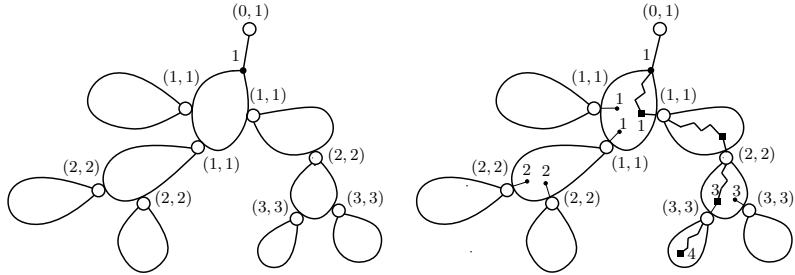}
\caption{ A decomposition of a random $(2,\Omega)$-front coding tree ${\sf B}_{n,2}$ into blocks ${\sf B}_{i,n,2}$ (left) where the pair $(a,b)$ of integers represents the distance sequence on the root of a block. A spine (right) consists of selected good nodes in ${\sf B}_{n,2}$.
\label{F:3}}
\end{center}
\end{figure}
\begin{lemma}\label{L:diseq}
Let ${\sf B}_{n,k}$ denote the tree corresponding to the Boltzmann sampler $\Gamma B_k(\rho_{k,\Omega})$ conditioned on having $n$ black nodes, let ${\sf G}_{n,k}^{\bullet}$ be the corresponding $\Omega$-$k$-tree of ${\sf B}_{n,k}$ under the bijection $\varphi^{-1}:\mathsf{B}_{n,k}\mapsto \mathsf{G}_{n,k}^{\bullet}$. Then for any two black nodes $x,y$ in ${\sf G}_{n,k}^{\bullet}$,
\begin{equation}\label{E:diseq}
{\rm dist}_{{\sf G}_{n,k}^{\bullet}}(x,y)=\delta_{{\sf B}_{n,k}}(x,y)+i\,\,\mbox{ where }\, i\in \{0,1,2,3\}.
\end{equation}
\end{lemma}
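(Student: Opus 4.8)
The plan is to compare the true graph distance in $\mathsf{G}_{n,k}^{\bullet}$ between the two vertices labelling $x$ and $y$ with the block distance $\delta_{\mathsf{B}_{n,k}}(x,y)$ by analysing the behaviour of Algorithm~$1$ along the unique path in the coding tree $\mathsf{B}_{n,k}$ joining the black nodes $x$ and $y$. First I would recall that under the bijection $\varphi$ a black node of $\mathsf{B}_{n,k}$ corresponds to a hedron of $\mathsf{G}_{n,k}^{\bullet}$ carrying one distinguished vertex-label, and that two black nodes that are joined by a path $\bullet-\circ-\bullet$ correspond to hedra sharing a common front; hence a step along an edge of the black tree $\mathsf{T}_n$ changes the ``new vertex'' label but stays inside a $(k+1)$-clique, so any two black nodes that are adjacent in $\mathsf{T}_n$ have the two vertices labelling them at graph-distance $1$ in $\mathsf{G}_{n,k}^{\bullet}$. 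Consequently the true distance is bounded above by the number of edges of $\mathsf{T}_n$ on the $x$--$y$ path, and bounded below as well by a path that walks through the shared fronts; the whole point is to see that this crude estimate, once we group the black nodes by their Algorithm~$1$ distance label, collapses exactly to $\delta_{\mathsf{B}_{n,k}}(x,y)$ up to an additive error in $\{0,1,2,3\}$.

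Next I would make precise what a block is in terms of distance labels: a block $\mathsf{B}_{i,n,k}$ collects black nodes all carrying the same Algorithm~$1$ distance value $i$, its root white node carries the constant distance sequence $((i-1)^k)$, and by construction the only labels appearing on the black nodes of $\mathsf{G}_{n,k}^{\bullet}$ inside one block correspond to vertices that are within bounded graph-distance of the ``entry vertex'' of that block. The key structural claim I would isolate is: if a black node $z$ lies in a block with root-sequence $((i-1)^k)$, then the vertex labelling $z$ is at graph-distance $i$ or $i-\text{(small constant)}$ from vertex $1$ — more precisely, within a block the graph distances to vertex $1$ take at most a bounded number of consecutive values, because the $k$ vertices of the root front of the block all lie at distance $i-1$ from vertex~$1$, and every vertex added inside the block is joined to such a front, so it lies at distance $i$ or closer; the $+1$ is because a black node's own label is the new vertex of a hedron over a front already inside the block. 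Crossing one block boundary increases the distance-to-$1$ label by exactly one, so a path meeting $a$ blocks has its two endpoints' distance labels differing by at most $a-1 = \delta_{\mathsf{B}_{n,k}}(x,y)$, and by the triangle inequality $\mathrm{dist}_{\mathsf{G}_{n,k}^{\bullet}}(x,y)$ is squeezed between $\delta_{\mathsf{B}_{n,k}}(x,y)$ minus a bounded constant and $\delta_{\mathsf{B}_{n,k}}(x,y)$ plus a bounded constant; a careful bookkeeping of the possible ``overshoot'' at each of the two ends of the path (entering and leaving the chain of blocks) pins the constant down to the range $\{0,1,2,3\}$.

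I expect the main obstacle to be the lower bound, i.e.\ showing $\mathrm{dist}_{\mathsf{G}_{n,k}^{\bullet}}(x,y) \ge \delta_{\mathsf{B}_{n,k}}(x,y) - O(1)$: one must argue that a geodesic in the $\Omega$-$k$-tree cannot ``skip'' blocks, which amounts to showing that adjacent vertices of $\mathsf{G}_{n,k}^{\bullet}$ have Algorithm~$1$ distance labels differing by at most one — this is essentially the correctness of Darrasse--Soria's Algorithm~$1$, computing the true distance to vertex $1$, and I would invoke it as already established in \cite{DS:09}. Granting that, any edge of $\mathsf{G}_{n,k}^{\bullet}$ changes the distance-to-$1$ label by at most $1$, hence changes the index of the containing block by at most $1$ (two vertices in the same front have the same label, and a hedron lives in one block up to its boundary front which straddles two consecutive blocks), so a path of length $\ell$ meets at most $\ell + O(1)$ blocks, giving $\ell \ge \delta_{\mathsf{B}_{n,k}}(x,y) - O(1)$. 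The upper bound is the easier direction via the $\bullet-\bullet$ adjacency in $\mathsf{T}_n$ together with the observation that within a single block one can travel between any two black nodes through the common front in a bounded number of steps; combining both directions and optimising the additive constants yields the claimed error set $\{0,1,2,3\}$, completing the proof.
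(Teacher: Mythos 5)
Your upper bound is essentially the paper's argument: inside a block every black node's vertex is adjacent to some vertex of the block's root front (otherwise its distance to vertex $1$ would exceed the block index), the root-front vertices are pairwise adjacent, so any two black nodes of one block are at graph distance at most $3$; and crossing a block boundary costs one graph step. The gap is in the lower bound. You reduce it to the claim that an edge of ${\sf G}_{n,k}^{\bullet}$ changes the Algorithm~$1$ distance-to-$1$ label by at most one, and conclude that a path of length $\ell$ meets at most $\ell+O(1)$ blocks. That inference is not valid: the label is a single integer, whereas $\delta_{{\sf B}_{n,k}}(x,y)$ counts the blocks covering the tree path in ${\sf B}_{n,k}$, and many distinct blocks carry the same label. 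If $x$ and $y$ sit at comparable depths in two different branches, their labels (nearly) agree, so the Lipschitz property of the label along edges gives no lower bound on $\delta_{{\sf B}_{n,k}}(x,y)$ at all. To repair this you would have to show that consecutive vertices of a geodesic lie in blocks that are equal or adjacent in the block tree, which is a genuinely stronger statement about where the hedra containing a given vertex can sit, and is not a consequence of the correctness of Algorithm~$1$ alone.

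The paper avoids this with one structural observation your plan does not use: if $x$ and $y$ lie in different blocks and $b$ is their last common black ancestor in ${\sf B}_{n,k}$, then the two sub-$\Omega$-$k$-trees corresponding to the subtrees of ${\sf B}_{n,k}$ hanging off $b$ toward $x$ and toward $y$ are disjoint in ${\sf G}_{n,k}^{\bullet}$, so every geodesic from $x$ to $y$ passes through the vertex of $b$ and ${\rm dist}_{{\sf G}_{n,k}^{\bullet}}(x,y)={\rm dist}_{{\sf G}_{n,k}^{\bullet}}(x,b)+{\rm dist}_{{\sf G}_{n,k}^{\bullet}}(y,b)$. This reduces the problem to the ancestor case, where one exhibits a chain $v_1,\dots,v_t$ on the coding-tree path with $v_j$ in the $j$-th block towards $b$ and ${\rm dist}_{{\sf G}_{n,k}^{\bullet}}(x,v_j)=j$, yielding the exact identity ${\rm dist}_{{\sf G}_{n,k}^{\bullet}}(x,y)=\delta_{{\sf B}_{n,k}}(x,y)+{\rm dist}_{{\sf G}_{n,k}^{\bullet}}(v_t,w_s)$ with the remainder confined to a single block and hence in $\{0,1,2,3\}$. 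Note also that the lemma asserts $i\ge 0$, i.e.\ ${\rm dist}\ge\delta$ exactly; your plan only promises ${\rm dist}\ge\delta-O(1)$ with constants ``to be optimised'', which would be enough for the application to the scaling limit but not for the statement as written.
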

\begin{proof}
If $x,y$ are in the same block, i.e., $\delta_{{\sf B}_{n,k}}(x,y)=0$. If both of them are in a block ${\sf B}_{1,n,k}$, then
$${\rm dist}_{{\sf G}_{n,k}^{\bullet}}(x,y)\le {\rm dist}_{{\sf G}_{n,k}^{\bullet}}(x,1)+{\rm dist}_{{\sf G}_{n,k}^{\bullet}}(y,1)=2=\delta_{{\sf B}_{n,k}}(x,y)+2.$$
If both of them are in a block ${\sf B}_{i+1,n,k}$ for some $i\ge 1$, recall that the root of ${\sf B}_{i+1,n,k}$ is a white node with distance sequence $(i^k)$. Suppose the root of ${\sf B}_{i+1,n,k}$ has label $\{a_1,a_2,\ldots,a_k\}$, then for $x\in {\sf B}_{i+1,n,k}$, there exists an integer $p$ such that ${\rm dist}_{{\sf G}_{n,k}^{\bullet}}(a_p,x)=1$. Otherwise if for all $m\le k$, ${\rm dist}_{{\sf G}_{n,k}^{\bullet}}(a_m,x)>1$. It follows that ${\rm dist}_{\varphi({\sf B}_{n,k})}(x,1)>i+1$, which contradicts to the fact $x\in {\sf B}_{i+1,n,k}$. Similarly, there is an integer $q$ such that ${\rm dist}_{{\sf B}_{n,k}^{\bullet}}(a_q,y)=1$. Consequently
$${\rm dist}_{{\sf G}_{n,k}^{\bullet}}(x,y)\le
{\rm dist}_{{\sf G}_{n,k}^{\bullet}}(a_p,x)+{\rm dist}_{{\sf G}_{n,k}^{\bullet}}(a_q,y)+{\rm dist}_{{\sf G}_{n,k}^{\bullet}}(a_q,a_p)=3,$$
which implies (\ref{E:diseq}).

If $x,y$ are not in the same block, let $b$ be the last common parent of $x$ and $y$ in ${\sf B}_{n,k}$, then $b$ must be a black node. Let $a_1$ (resp. $b_1$) be the second black node on the path $b-\circ-a_1-\cdots-\circ-x$ (resp. $b-\circ-b_1-\cdots-\circ-y$) in ${\sf B}_{n,k}$. Then
one of the minimal paths connecting $x$ and $y$ in ${\sf G}_{n,k}^{\bullet}$ must pass node $b$. This is true because the $\Omega$-$k$-tree corresponding to the subtree of ${\sf B}_{n,k}$ rooted at $a_1$ and the $\Omega$-$k$-tree corresponding to the subtree of ${\sf B}_{n,k}$ rooted at $b_1$ are completely disjoint in ${\sf G}_{n,k}^{\bullet}$. This implies
$${\rm dist}_{{\sf G}_{n,k}^{\bullet}}(x,y)={\rm dist}_{{\sf G}_{n,k}^{\bullet}}(x,b)+{\rm dist}_{{\sf G}_{n,k}^{\bullet}}(y,b).$$
Suppose $x\in {\sf B}_{i+1,n,k}$, there must exist a black node $v_1$ on the path $b-\circ-a_1-\cdots-\circ-x$, such that ${\rm dist}_{{\sf G}_{n,k}^{\bullet}}(x,v_1)=1$ and $v_1\in {\sf B}_{i,n,k}$. For the node $v_1$, there exists a black node $v_2$ on the path such that $v_2\in {\sf B}_{i-1,n,k}$ and ${\rm dist}_{{\sf G}_{n,k}^{\bullet}}(x,v_2)=2$. We continue this process until we reach a black node $v_t$ such that $v_t$ and $b$ are in the same block. Similarly, we can find a sequence of black nodes $w_1,\ldots,w_s$ from different blocks such that $w_s$ and $b$ are in the same block and ${\rm dist}_{{\sf G}_{n,k}^{\bullet}}(y,w_s)=s$. It follows that
\begin{align*}
{\rm dist}_{{\sf G}_{n,k}^{\bullet}}(x,b)+{\rm dist}_{{\sf G}_{n,k}^{\bullet}}(y,b)
&=\delta_{{\sf B}_{n,k}}(x,b)+\delta_{{\sf B}_{n,k}}(y,b)+{\rm dist}_{{\sf G}_{n,k}^{\bullet}}(v_t,w_s)\\
&=\delta_{{\sf B}_{n,k}}(x,y)+{\rm dist}_{{\sf G}_{n,k}^{\bullet}}(v_t,w_s).
\end{align*}
Since $v_t$ and $w_s$ are in the same block, we have ${\rm dist}_{{\sf G}_{n,k}^{\bullet}}(v_t,w_s)\le 3$ and the proof is complete.
\end{proof}
Lemma~\ref{L:diseq} allows us to transfer the distance ${\rm dist}_{{\sf G}_{n,k}^{\bullet}}(x,y)$ of two vertices $x,y$ in a random $\Omega$-$k$-tree ${\sf G}_{n,k}^{\bullet}$ to the distance $\delta_{{\sf B}_{n,k}}(x,y)$ of two blocks in a random tree ${\sf B}_{n,k}$. In order to prove the convergence of ${\sf G}_{n,k}^{\bullet}$ to the CRT $\CMcal{T}_{{\sf e}}$, it is sufficient to prove that with high probability the difference between $\mathfrak{m}_k\delta_{{\sf B}_{n,k}}(x,y)$ and  ${\rm dist}_{{\sf T}_{n}}(x,y)$ is uniformly small for all choices of $x,y$, where ${\sf T}_{n}$ is the above conditioned critical Galton-Watson tree and $\mathfrak{m}_k$ is a constant. For this purpose we consider the {\em spine} of a size-biased enriched tree, which was adapted from the size-biased Galton-Watson tree. This idea has been used in studying the scaling limit of random graphs from subcritical graph classes \cite{P:14} and was further generalized to the random $\CMcal{R}$-enriched trees \cite{Stu:142}.

In fact, the block-distance $\delta_{{\sf B}_{n,k}}(v,1)$ to the vertex $1$ in the random tree ${\sf B}_{n,k}$ is not related to the depth of $v$ in ${\sf B}_{n,k}$. It turns out that we have to choose a {\em good} black node $\nu_i$ from a block ${\sf B}_{i,n,k}$ of the random tree ${\sf B}_{n,k}$, such that they form a {\em spine} $\nu_1,\ldots,\nu_m$ and $\delta_{{\sf B}_{n,k}}(\nu_i,1)$ increases as the depth of $\nu_i$ on this spine increases; see Fig~\ref{F:3} and \ref{F:4}.

We call a black node $v$ in a $(k,\Omega)$-front coding tree {\em good} if one of its white children has distance sequence $(i^k)$ for some integer $i\ge 1$. Let ${\sf B}_k$ denote the random $(k,\Omega)$-front coding tree that is generated by the above Boltzmann sampler
so that ${\sf B}_{n,k}=({\sf B}_k:\vert{\sf B}_k\vert=n)$. In the same way, let ${\sf C}_{i,k}$ be a block of ${\sf B}_k$ which equals ${\sf B}_{i,n,k}$ if we condition ${\sf B}_k$ on size $n$. The next Lemma~\ref{L:dis1} will enable us to construct a size-biased enriched tree.
\begin{lemma}\label{L:dis1}
Suppose that $i\ge 1$ and let $\xi_{k,i}$ be the random variable counting the number of good black nodes $v$ in an $i$-block ${\sf C}_{i,k}$ in ${\sf B}_k$. Then
$\mathbb{E}\,\xi_{k,i}=1$.
\end{lemma}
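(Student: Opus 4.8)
The plan is to reduce the computation of $\mathbb{E}\,\xi_{k,i}$ to a short linear recursion by classifying the white nodes of a block according to a suitable ``type'', and then to solve that recursion using the criticality relation $\mathbb{E}\,\xi_\bullet = k\,\mathbb{E}\,\xi_\circ = 1$ that comes out of the dominant-singularity equation (\ref{E:domieq}). Recall from Algorithm~$1$ that a black child $v$ of a white node $w$ with distance sequence $\sigma(w)=(a_1,\dots,a_k)$ receives the distance label $p=\min_j a_j+1$, and that the $j$-th white child of $v$ gets the sequence obtained from $\sigma(w)$ by replacing $a_j$ by $p$. An easy downward induction shows that inside an $i$-block every distance sequence that occurs has all coordinates in $\{i-1,i\}$, with at least one coordinate equal to $i-1$; call a white node of the block of \emph{type} $\ell$ if exactly $\ell$ of its $k$ coordinates attain the minimal value $i-1$, so that $\ell\in\{1,\dots,k\}$ and the block root has type $k$ (for $i\ge 2$). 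Reading off the rule above, a black child $v$ of a type-$\ell$ white node has distance label $i$, produces exactly $\ell$ white children of type $\ell-1$ and $k-\ell$ white children of type $\ell$, and its type-$0$ white children (which occur precisely when $\ell=1$, and each has the constant sequence $(i^k)$) are exactly the roots of the $(i+1)$-blocks. Hence a black node of the block is \emph{good} if and only if it is a child of a type-$1$ white node.

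Next, by the construction of the Boltzmann sampler $\Gamma B_k(\rho_{k,\Omega})$ in Lemma~\ref{L:Boltz}, every white node is assigned an independent number of black children distributed as $\xi_\circ$ (see (\ref{E:bullet})) and the subtrees dangling from distinct black children are independent. In particular the multi-type branching process describing the white nodes of a single block, with the $k$ types just introduced, has mean matrix with diagonal entries $(k-\ell)/k<1$, hence is subcritical, so a block is a.s.\ finite with finite expected size. Writing $G_\ell$ for the expected number of good black nodes in the sub-block rooted at a type-$\ell$ white node, linearity of expectation combined with the previous paragraph gives
\begin{align*}
G_1=\mathbb{E}[\xi_\circ]\bigl(1+(k-1)G_1\bigr),\qquad
G_\ell=\mathbb{E}[\xi_\circ]\bigl(\ell\,G_{\ell-1}+(k-\ell)G_\ell\bigr)\quad(2\le\ell\le k),
\end{align*}
the ``$1$'' accounting for the good black child itself.

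Since the black tree ${\sf T}_n$ is a critical Galton--Watson tree with offspring $\xi_\bullet=\sum_{j=1}^k\xi_{\circ,j}$, we have $k\,\mathbb{E}[\xi_\circ]=\mathbb{E}[\xi_\bullet]=1$, i.e.\ $\mathbb{E}[\xi_\circ]=1/k$; this is precisely the content of (\ref{E:domieq}). Substituting, the first equation becomes $kG_1=1+(k-1)G_1$, so $G_1=1$, and for $\ell\ge 2$ the recursion becomes $\ell\,G_\ell=\ell\,G_{\ell-1}$, so $G_\ell=G_{\ell-1}$; therefore $G_1=G_2=\dots=G_k=1$. Since an $i$-block ${\sf C}_{i,k}$ is rooted at a white node of type $k$ whose black children form an independent $\xi_\circ$-family, we conclude $\mathbb{E}\,\xi_{k,i}=G_k=1$, as claimed. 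Equivalently, the block-internal process is subcritical but emits on average one type-$0$ white node, which is exactly what makes the sequence of blocks along a spine a critical branching process.

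The main obstacle is the first step, namely finding the right bookkeeping: one has to notice that, although the distance sequences live in $\mathbb{Z}^k$, the only relevant feature within a fixed block is the multiplicity of the minimal coordinate, extract from Algorithm~$1$ the exact offspring rule for these types, and match the good black nodes with the children of type-$1$ white nodes. Once this is in place, the rest is the two-line linear recursion above. A minor additional point is the boundary block ${\sf C}_{1,k}$, rooted at the white root of ${\sf B}_k$ (distance sequence $(0,1^{k-1})$, type $1$), which is handled separately by the same recursion.
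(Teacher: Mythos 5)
Your proof is correct and follows essentially the same route as the paper: classify the white nodes of a block by their distance sequences, set up a first-moment recursion for the expected number of good black descendants, and solve it using the criticality relation $\mathbb{E}[\xi_\circ]=k^{-1}$ coming from (\ref{E:domieq}). Your bookkeeping by the multiplicity $\ell$ of the minimal coordinate (yielding the triangular system $G_1=1$, $G_\ell=G_{\ell-1}$) is in fact more explicit and self-contained than the paper's recursion on $k$ via $\mathbb{E}\,\xi_{k-1,1}$, and your remark on subcriticality of the within-block process, which justifies finiteness of the $G_\ell$, is a welcome addition.
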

\begin{proof}
The offspring $\xi_{\circ}$ of every white root in ${\sf B}_k$ follows probability distribution (\ref{E:bullet}) and the offspring of every black node in ${\sf B}_k$ is distributed as the sum of $k$ independent and identically distributed random variables $\xi_{\circ,i}$ which are copies of $\xi_{\circ}$. The distance sequence on every white node of ${\sf B}_k$ determines if its children (black nodes) are good or not. We first compute $\mathbb{E}\,\xi_{k,1}$. Together with (\ref{E:domieq}), the first generation of the white root of ${\sf C}_{1,k}$ has
\begin{align}
\label{eq:fm}
\mathbb{E}(\xi_{\circ})
&=\sum_{i\in \Omega_{\scriptsize{\mbox{out}}}}i\,(C_k^{\circ}(\rho_{k,\Omega}))^{-1}
\,\frac{(B_k(\rho_{k,\Omega}))^i}{i!}=k^{-1}
\end{align}
expected number of black nodes. We assume that $\mu_1$ is a black node in the first generation, $\mu_1$ has $k$ white-node children in ${\sf C}_{1,k}$, among which $(k-1)$ white nodes have distance sequence $(0^{k-2},1^{2})$ and they have $\mathbb{E}\,\xi_{k-1,1}$ expected number of good black descendants in ${\sf B}_{k}$. One white-node child has distance sequence $(0^{k-1},1)$ and it has $\mathbb{E}\,\xi_{k,1}$ expected number of good black descendants in ${\sf B}_{k}$. It follows that
$\mathbb{E}\,\xi_{k,1}=k^{-1}\mathbb{E}\,\xi_{k,1}+(1-k^{-1})\mathbb{E}\,\xi_{k-1,1}$
which implies $\mathbb{E}\,\xi_{k,1}=\mathbb{E}\,\xi_{2,1}$. It is easy to compute $\mathbb{E}\,\xi_{2,1}$ by repeating the same procedure, which yields
$\mathbb{E}\,\xi_{k,1}=\mathbb{E}\,\xi_{2,1}=1$. Similarly, we can show for $i\ne 1$, $\mathbb{E}(\xi_{k,i})=k^{-1}\cdot k\cdot \mathbb{E}(\xi_{k,1})=1$.
\end{proof}
\begin{figure}[htbp]
\begin{center}
\includegraphics[scale=0.7]{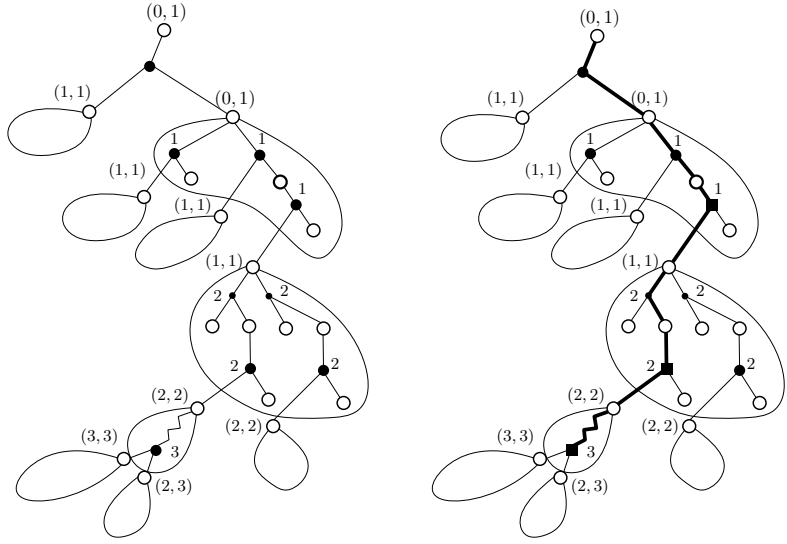}
\caption{ A $(2,\Omega)$-front coding tree ${\sf B}_{2}$ with good nodes drawn with black dots (left) and a size-biased enriched tree $\hat{{\sf B}}^{(3)}_2$ with
a spine consisting of selected good nodes drawn with black squares (right).
\label{F:4}}
\end{center}
\end{figure}
We will next define a {\em size-biased enriched tree} $\hat{{\sf B}}^{(m)}_k$ from a random $(k,\Omega)$-front coding tree ${\sf B}_k$. This construction is adapted from \cite{J:12}, which is a truncated version of the infinite size-biased Galton-Watson tree introduced by Kesten \cite{Kesten}, Lyons, Pemantle and Peres \cite{Lyons}. The size-biased Galton-Watson tree considered the distribution of offsprings in each generation of a Galton-Watson tree, while our size-biased enriched tree considered the distribution of good nodes in every block of ${\sf B}_k$. Let $\hat{\xi}_{k,i}$ be a random variable with the {\em size-biased} distribution
\begin{equation}\label{E:size}
\mathbb{P}(\hat{\xi}_{k,i}=q)=q\,\mathbb{P}(\xi_{k,i}=q).
\end{equation}
The expected value $\mathbb{E}\,\xi_{k,i}=1$ in Lemma~{\ref{L:dis1}} guarantees that $\hat{\xi}_{k,i}$ is a probability distribution on the set $\mathbb{N}_0=\{0,1,2,\ldots\}$.

The size-biased enriched tree $\hat{{\sf B}}^{(m)}_k$ is now defined as follows. It starts with a {\it mutant block} ${\sf C}_{1,k}$ which is rooted at a usual root (that has distance sequence $(0,1^{k-1})$) and contains good nodes.
We now choose one of these good nodes (which number is distributed according to $\hat{\xi}_{k,1}$) and call it {\it heir}
(and also {\it mutant}).
The block ${\sf C}_{2,k}$ that is rooted at the child with distance sequence $(1^k)$ of this heir will be the next mutant block, where we again assume that it has at least one good node. All other blocks that are adjacent to
${\sf C}_{1,k}$ are {\it normal}. We again choose one of the good nodes of the mutant block  ${\sf C}_{2,k}$
(which number is distributed according to $\hat{\xi}_{k,2}$) and proceed inductively to
choose mutant blocks and heirs till ${\sf C}_{m,k}$. All other blocks stay normal.
We denote the heir in the $m$-th mutant block ${\sf C}_{m,k}$ by $h$.
The path from the root to $h$ is  called {\em spine} of $\hat{{\sf B}}^{(m)}_k$; see Figure~\ref{F:4}.

The probability that a given mutant block contains $q$ good nodes and one of them is chosen as heir is, see (\ref{E:size}), $q^{-1}\mathbb{P}(\hat{\xi}_{k,i}=q)=\mathbb{P}(\xi_{k,i}=q)$.
For any given random $(k,\Omega)$-front coding tree $T$, let $T^{\alpha}$ denote the tree $T$ with a fixed spine $\alpha$ of block-depth $m$. Then the probability
\begin{equation}\label{E:spine}
\mathbb{P}(\hat{{\sf B}}^{(m)}_k=T^{\alpha},\mbox{ with } \alpha\,\mbox{ as a spine})=\mathbb{P}({\sf B}_k=T).
\end{equation}
This shows, once the spine is fixed, that the probability that the size biased tree $\hat{{\sf B}}^{(m)}_k$ equals $T^{\alpha}$ is the same as the probability of generating $T$. In fact, (\ref{E:spine}) is true for any fixed spine $\alpha$; see Eq.(3.2) in \cite{J:12}. We will need (\ref{E:spine}) to build a connection between $\mathfrak{m}_k\delta_{{\sf B}_{n,k}}(x,y)$ and $d_{{\sf B}_{n,k}}(x,y)$ with high probability in Lemma~\ref{L:k1}.
\begin{lemma}\label{L:k1}
Let $\CMcal{B}_{n,k}$ be the class of $\circ-\bullet$ $(k,\Omega)$-front coding trees of size $n$ such that the white root has label $\{1,2,\ldots,k\}$ and ${\sf B}_{n,k}\in \CMcal{B}_{n,k}$ is uniformly selected at random. Let $\mathfrak{m}_k=kH_{k}$. Then for all $s>1$ and $0<\epsilon<1/2$ with $2\epsilon s>1$, we have for all black nodes $x,y$ in ${\sf B}_{n,k}$ such that $x$ is an ancestor of $y$, that one of these two properties
\begin{eqnarray}\label{E:key1}
&&\delta_{{\sf B}_{n,k}}(x,y)\ge\log^s(n)\,\mbox{ and }\,\vert d_{{\sf B}_{n,k}}(x,y)-\mathfrak{m}_k\delta_{{\sf B}_{n,k}}(x,y)\vert\le \delta_{{\sf B}_{n,k}}(x,y)^{1/2+\epsilon},\\
\label{E:key2}
&&\delta_{{\sf B}_{n,k}}(x,y)<\log^s(n)\,\mbox{ and }\,d_{{\sf B}_{n,k}}(x,y)\le \log^{s+2}(n)
\end{eqnarray}
holds with high probability.
\end{lemma}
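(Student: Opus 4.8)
\emph{Plan.} The whole argument runs through the size-biased enriched tree $\hat{{\sf B}}_k^{(m)}$ and the change-of-measure identity~(\ref{E:spine}). Along the spine $h_0=\mathrm{root},h_1,\dots,h_m$ put $Y_j:=d_{{\sf B}_{n,k}}(h_{j-1},h_j)$; since $h_{j-1}$ is the black parent of the white root of the $j$-th block and this white root is contracted in the black tree ${\sf T}_n$, $Y_j$ is exactly the generation of the heir $h_j$ inside its own block. The core input is
\[
\mathbb{E}[Y_j]=\mathfrak{m}_k=kH_k\qquad(j\ge 2),
\]
which I would establish by a first-moment computation on the internal branching of a block. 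Algorithm~$1$ attaches to each black node of a block a \emph{state} $t\in\{0,1,\dots,k-1\}$, namely the number of coordinates of the distance sequence of its parent white node already raised to the current level; a black node of state $t$ then has $t$ white children keeping state $t$ and $k-t$ advancing to state $t+1$, the white children of state $k$ are exactly the roots of the next blocks, and the \emph{good} nodes are precisely the black nodes of state $k-1$. Writing $A_t$, $G_t$ for the expected number of good descendants of a state-$t$ black node and for the expected sum of their generations measured from it, and using $\mathbb{E}[\xi_\circ]=1/k$ from~(\ref{eq:fm}), one gets $A_t\equiv k$, $G_{k-1}=k(k-1)$ and $G_t=G_{t+1}+\tfrac{k^2}{k-t}$ for $t<k-1$, hence $G_0=k(kH_k-1)$ and $\mathbb{E}[Y_j]=\tfrac1k(G_0+A_0)=kH_k$ (so $\tfrac1kA_0=1$, consistent with Lemma~\ref{L:dis1}). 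The same computation shows that the mean matrix of this internal type-process is upper triangular with eigenvalues $0,\tfrac1k,\dots,\tfrac{k-1}{k}$, so a block is subcritical; the special root block ${\sf C}_{1,k}$ is treated separately and only contributes an extra term $Y_1$ with $\mathbb{E}[Y_1]=k$ and exponential tail.

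\smallskip

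Subcriticality makes the height and the size of a block exponentially tailed, and by Cauchy--Schwarz this persists after size-biasing by the number of good nodes, so each $Y_j$ has an exponential tail with constants not depending on $n$. A union bound over the at most $n$ blocks of ${\sf B}_{n,k}$ then yields that, with high probability, every block of ${\sf B}_{n,k}$ has height at most $C\log n$ for some constant $C$. This already proves~(\ref{E:key2}): if $\delta_{{\sf B}_{n,k}}(x,y)<\log^s(n)$, the path from $x$ to $y$ meets at most $\log^s(n)+1$ blocks and drops by at most $C\log n$ in each, so $d_{{\sf B}_{n,k}}(x,y)\le(\log^s(n)+1)\,C\log n\le\log^{s+2}(n)$ for $n$ large.

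\smallskip

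For~(\ref{E:key1}) I would start from the deterministic decomposition: if $x$ sits in the $m_x$-th and $y$ in the $m_y$-th block on the path from the root (so $\delta:=\delta_{{\sf B}_{n,k}}(x,y)=m_y-m_x$), then
\[
d_{{\sf B}_{n,k}}(x,y)=\sum_{j=m_x+1}^{m_y-1}Y_j+R,\qquad 0\le R\le 2\cdot(\text{maximal block height}),
\]
the remainder $R$ coming from the descent inside $x$'s block to its exit good node and inside $y$'s block down to $y$. By~(\ref{E:spine}) the middle terms are independent copies of $Y$, so Bernstein's inequality for sums of independent, centred, exponentially-tailed variables gives, for a fixed pair with $\delta\ge\log^s(n)$,
\[
\mathbb{P}\Bigl(\bigl|\textstyle\sum_{j=m_x+1}^{m_y-1}(Y_j-\mathfrak{m}_k)\bigr|>\tfrac12\delta^{1/2+\epsilon}\Bigr)\le\exp\!\bigl(-c\,\delta^{2\epsilon}\bigr)\le\exp\!\bigl(-c\,\log^{2\epsilon s}(n)\bigr),
\]
where $\epsilon<1/2$ makes $\delta^{2\epsilon}\le\delta^{1/2+\epsilon}$, placing us in the Gaussian regime of Bernstein's bound. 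Since $2\epsilon s>1$ this probability is super-polynomially small, so it survives the conditioning on $|{\sf B}_k|=n$ (which by~(\ref{eq:required}) costs only a factor $O(n^{3/2})$) and a union bound over all $O(n^2)$ ancestor--descendant pairs. As $R\le 2C\log n$ and the discrepancy $\mathfrak{m}_k$ between $\delta-1$ summands and $\mathfrak{m}_k\delta$ are both $o(\delta^{1/2+\epsilon})$ once $\delta\ge\log^s(n)$ --- using $s>1$, so that $s(1/2+\epsilon)>(s+1)/2>1$ --- we conclude $|d_{{\sf B}_{n,k}}(x,y)-\mathfrak{m}_k\delta_{{\sf B}_{n,k}}(x,y)|\le\delta_{{\sf B}_{n,k}}(x,y)^{1/2+\epsilon}$.

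\smallskip

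The hardest step is upgrading the single-pair estimate to all $O(n^2)$ ancestor--descendant pairs simultaneously while working under the conditioning $|{\sf B}_k|=n$. This forces one to state the concentration as a maximal inequality controlling \emph{every} sub-interval of a given root-to-leaf path by one and the same family $(Y_j)$, to check via the spinal decomposition behind~(\ref{E:spine}) that the good node at which the $x$--$y$ path exits a block is exactly the heir whose within-block generation is $Y$-distributed, and to verify that the endpoint remainder $R$ is genuinely negligible against $\delta^{1/2+\epsilon}$ throughout the range $\delta\ge\log^s(n)$ --- which is precisely where the two hypotheses $s>1$ and $2\epsilon s>1$ are used.
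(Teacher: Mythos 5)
Your proposal follows essentially the same route as the paper: the size-biased enriched tree with its spine, the change-of-measure identity (\ref{E:spine}) costing a factor $O(n^{3/2})$ after conditioning on $|{\sf B}_k|=n$, spine increments with mean $kH_k$ and finite exponential moments, and a deviation inequality whose exponent $\log^{2\epsilon s}(n)$ beats the polynomial factors precisely because $2\epsilon s>1$. The only differences are cosmetic --- you derive $\mathbb{E}[Y_j]=kH_k$ by an explicit multitype recursion where the paper writes down the generating function $\mathbb{E}\,z^{\zeta_{i,k}}=z\prod_{i=1}^{k-1}\frac{iz}{k-iz}$, and you prove (\ref{E:key2}) by a union bound on block heights rather than by the same spine sum --- and your value $\mathbb{E}[Y_1]=k$ for the root block (the paper gets $kH_{k-1}$) is immaterial since that single term is absorbed in the remainder.
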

\begin{proof}
Suppose the opposite of (\ref{E:key1}) is true, that is, there exist black nodes $x,y$ in ${\sf B}_k$ such that $x$ is an ancestor of $y$ and they satisfy
\begin{eqnarray}\label{E:co1}
&&\delta_{{\sf B}_{k}}(x,y)\ge \log^s(|{\sf B}_{k}|)\, \mbox{ and }\,\vert d_{{\sf B}_{k}}(x,y)-\mathfrak{m}_k\delta_{{\sf B}_{k}}(x,y)\vert> \delta_{{\sf B}_{k}}(x,y)^{1/2+\epsilon}.
\end{eqnarray}
We will denote by $ \CMcal{F}_1$ the set of triples $({\sf B}_{k},x,y)$ (with $x,y$ in ${\sf B}_{k}$) that satisfy (\ref{E:co1}). Thus we just have to show that  $\mathbb{P}(({\sf B}_{k},x,y)\in \CMcal{F}_1 \big| |{\sf B}_{k}| = n)=o(1)$ as $n$ tends to infinity.

Recall that ${\sf B}_{n,k}$ is a random $(k,\Omega)$-front coding trees generated by the Boltzmann sampler $\Gamma B_k(\rho_{k,\Omega})$
with $n$ black nodes. Thus, in combination of Lemma~\ref{L:Boltz} and the universal analytic solution of functional equations; see Theorem 2.19 in \cite{Drmotabook}, it holds that for a positive constant $\sigma_{\Omega}^2=k\mathbb{V}\mbox{ar}\,\xi_{\circ}$,
\begin{align}\label{E:drawc}
\mathbb{P}[{\sf B}_{n,k}]=\mathbb{P}[\vert\Gamma B_k(\rho_{k,\Omega})\vert=n]
=\frac{b_{k,\Omega}(n)\rho_{k,\Omega}^n}{n!\,B_k(\rho_{k,\Omega})}
\sim \frac{n^{-3/2}}{\sigma_{\Omega}\sqrt{2\pi}}\,\mbox{ as }\, n\rightarrow \infty.
\end{align}
We apply (\ref{E:spine}) on the random $(k,\Omega)$-front coding tree ${\sf B}_{n,k}$ with a spine that connects $x$ to $y$. The block-depth of this spine is at least $\log^s n$ by assumption (\ref{E:co1}), which leads to
\begin{align}
\nonumber\mathbb{P}[({\sf B}_{k},x,y)\in \CMcal{F}_1 \big| |{\sf B}_{k}| = n]&\le\mathbb{P}[{\sf B}_{n,k}]^{-1}\sum_{m=\log^s n}^{n-1}
\mathbb{P}[(\hat{{\sf B}}^{(m)}_k,x,y)\in \CMcal{F}_1\,\mbox{ and }\, \vert\hat{{\sf B}}^{(m)}_k\vert=n]\\
&\sim \sigma_{\Omega}\sqrt{2\pi} n^{3/2}\sum_{m=\log^s n}^{n-1}
\mathbb{P}[(\hat{{\sf B}}^{(m)}_k,x,y)\in \CMcal{F}_1\,\mbox{ and }\, \vert\hat{{\sf B}}^{(m)}_k\vert=n]\label{E:fi}
\end{align}
as $n\rightarrow \infty$. Here the length of the spine in $\hat{{\sf B}}^{(m)}_k$ is distributed as the sum of $m$ independent random variables $\zeta_{1,k},\zeta_{2,k},\ldots,\zeta_{m,k}$ where each $\zeta_{i,k}$ is distributed as the length of the path from the selected good node in some block ${\sf C}_{i,k}$ to the root of this block. We have for $k\ge 2$, the probability generating functions of random variables $\zeta_{i,k}$ are
\begin{eqnarray*}
\mathbb{E}\,z^{\zeta_{1,k}}=\prod_{i=1}^{k-1}\frac{iz}{k-iz}\,\mbox{ where }\,i\ne 1\,\mbox{ and }\, \mathbb{E}\,z^{\zeta_{i,k}}=z\cdot\mathbb{E}\,z^{\zeta_{1,k}}.
\end{eqnarray*}
(We just have to extend the proof idea of Lemma~\ref{L:dis1}.)
For the case $k=1$, every $\zeta_{i,1}$ is distributed with probability $\mathbb{P}[\zeta_{i,1}=1]=1$. As an immediate consequence, $\zeta_{i,k}$ has finite exponential moments for every $i,k$ and $\mathbb{E}[\zeta_{i,k}]=kH_{k}$, $\mathbb{E}[\zeta_{1,k}]=kH_{k-1}$ for $k\ge 2,i\ne 1$ and $H_{k}$ is the $k$-th Harmonic number. For the case $k=1$ we have $\mathbb{E}[\zeta_{i,1}]=1$ for every $i$. We set $\mathfrak{m}_k=kH_{k}$ for $k\ge 1$. Furthermore, the assumption in (\ref{E:co1}) implies
\begin{eqnarray}\label{E:bound}
\mathbb{P}[(\hat{{\sf B}}^{(m)}_k,x,y)\in \CMcal{F}_1\,\mbox{ and }\, \vert\hat{{\sf B}}^{(m)}_k\vert=n]\le \mathbb{P}[\vert \sum_{i=1}^m\zeta_{i,k}-m\cdot\mathfrak{m}_k\vert>m^{1/2+\epsilon}].
\end{eqnarray}
By applying the deviation inequality (see \cite{J:12,P:14,P:142}) on the random variables $\zeta_{1,k},\zeta_{2,k},\ldots,\zeta_{m,k}$, we get for some positive constant $\bar{c}_1$ and $m\in [\log^sn,n-1]$,
\begin{eqnarray*}
\mathbb{P}[\vert \sum_{i=1}^m\zeta_{i,k}-m\cdot\mathfrak{m}_k\vert>m^{1/2+\epsilon}]
\le 2\exp(-\bar{c}_1(\log n)^{2s\epsilon})=o(n^{-5/2}).
\end{eqnarray*}
Together with (\ref{E:fi}) and (\ref{E:bound}), we can conclude that
 $\mathbb{P}[({\sf B}_{k},x,y)\in \CMcal{F}_1 \big| |{\sf B}_{k}| = n]=o(1)$.

Now we turn to suppose the opposite of (\ref{E:key2}) is true, i.e.,
there exist black nodes $x,y$ in ${\sf B}_{k}$ such that $x$ is an ancestor of $y$. They satisfy
\begin{equation}\label{E:co2}
\delta_{{\sf B}_{k}}(x,y)<\log^s(\vert{\sf B}_k\vert)\,\mbox{ and }\, d_{\CMcal{B}_{k}}(x,y)>\log^{s+2}(\vert{\sf B}_k\vert).
\end{equation}
We use the notation $\CMcal{F}_2$ to represent the set of triples $({\sf B}_{k},x,y)$ (with $x,y$ in ${\sf B}_{k}$) that satisfy (\ref{E:co2}). Again from (\ref{E:fi}) and from the deviation inequality, we obtain for some positive constant $\bar{c}_2$,
\begin{align*}
\mathbb{P}[({\sf B}_{k},x,y)\in \CMcal{F}_2 \big||{\sf B}_{k} | = n]
&\le \sigma_{\Omega}\sqrt{2\pi}n^{3/2}
\sum_{m=1}^{\log^s n}\mathbb{P}[(\hat{{\sf B}}^{(m)}_k,x,y)\in \CMcal{F}_2\,\mbox{ and }\, \big|\hat{{\sf B}}^{(m)}_k\vert=n]\\
&\le \sigma_{\Omega}\sqrt{2\pi}n^{3/2}\sum_{m=1}^{\log^s n}\mathbb{P}[\sum_{i=1}^m\zeta_{i,k}>\log^{s+2} n]\\
&=O(n^{3/2})(\log^s n)\exp(-\bar{c}_2\log^{2s+4}(n))=o\,(1)
\end{align*}
and the proof is complete.
\end{proof}
Now we are ready to prove our first main result.

\medskip\noindent
{\em Proof of Theorem~\ref{T:crtk}}. It follows from Lemma~\ref{L:k1} that with high probability
\begin{eqnarray*}
\vert d_{{\sf B}_{n,k}}(x,y)-\mathfrak{m}_k \delta_{{\sf B}_{n,k}}(x,y)\vert\le \delta_{{\sf B}_{n,k}}(x,y)^{1/2+\epsilon}+\log^{s+2}(n)
\end{eqnarray*}
holds for any fixed $s$ and $\epsilon$ such that $0<\epsilon<\frac{1}{2}$ and $2\epsilon s>1$, and holds for all black nodes $x,y$ where $x$ is an ancestor of $y$ in the random $(k,\Omega)$-front coding tree ${\sf B}_{n,k}$. For any two black nodes $\mu,\nu$ in ${\sf B}_{n,k}$, let $\alpha$ be the last common ancestor of $\mu$ and $\nu$ ($\alpha$ could be a white node of ${\sf B}_{n,k}$), then
\begin{align}
\nonumber\vert d_{{\sf B}_{n,k}}(\mu,\nu)-\mathfrak{m}_k \delta_{{\sf B}_{n,k}}(\mu,\nu)\vert
&\le \delta_{{\sf B}_{n,k}}(\mu,\alpha)^{1/2+\epsilon}
+\delta_{{\sf B}_{n,k}}(\nu,\alpha)^{1/2+\epsilon}+2\log^{s+2}(n)\\
\label{E:dif1}&\le 2\mbox{H}({\sf B}_{n,k})^{1/2+\epsilon}+2\log^{s+2}(n),
\end{align}
where $\mbox{H}({\sf B}_{n,k})$ is the height of random tree ${\sf B}_{n,k}$. We recall that $d_{{\sf B}_{n,k}}(\mu,\nu)={\rm dist}_{{\sf T}_{n}}(\mu,\nu)$. It is clear that $\mbox{H}({\sf B}_{n,k})=\mbox{H}({\sf T}_{n})$ and consequently, (\ref{E:dif1}) rewrites to
\begin{align*}
\vert {\rm dist}_{{\sf T}_{n}}(\mu,\nu)-\mathfrak{m}_k \delta_{{\sf B}_{n,k}}(\mu,\nu)\vert &\le 2\mbox{H}({{\sf T}_{n}})^{1/2+\epsilon}+2\log^{s+2}(n).
\end{align*}
The tree ${\sf T}_{n}$ contains all black nodes of ${\sf B}_{n,k}$ and it is a critical conditioned Galton-Watson tree. By applying the tails for the height of ${\sf T}_{n}$; see Theorem 1.2 in \cite{J:12} and left-tail upper bounds for the height in \cite{J:12}, we obtain the Gromov-Hausdorff distance
\begin{align*}
d_{\scriptsize{\mbox{GH}}}(n^{-1/2}{\sf T}_{n},n^{-1/2}\mathfrak{m}_k{\sf B}_{n,k})&\le \frac{1}{2}\max_{\mu,\nu} \vert n^{-1/2}{\rm dist}_{{\sf T}_{n}}(\mu,\nu)-
n^{-1/2}\mathfrak{m}_k \delta_{{\sf B}_{n,k}}(\mu,\nu)\vert \\
&\le n^{-1/2}\mbox{H}({\sf T}_{n})^{1/2+\epsilon}+n^{-1/2}\log^{s+2}(n)\xrightarrow{p} 0.
\end{align*}
Namely, for any fixed $\varepsilon$, the probability of the event $d_{\scriptsize{\mbox{GH}}}(n^{-1/2}{\sf T}_{n},n^{-1/2}\mathfrak{m}_k{\sf B}_{n,k})\le \varepsilon$ converges to $1$ as $n$ tends to infinity. Since $\xi_{\circ}$ has probability distribution (\ref{E:bullet}), for any specific degree set $\Omega$, the variance of the offspring distribution in the first generation of the random tree ${\sf T}_{n}$ is $\sigma_{\Omega}^2=k\mathbb{V}\mbox{ar}\,\xi_{\circ}$. Then it follows from Theorem~\ref{T:tran} that
\begin{equation*}
\frac{\sigma_{\Omega}{\sf T}_{n}}{2\sqrt{n}}\xrightarrow{d}\CMcal{T}_{e}
\quad\,\mbox{in the metric space }\,(\mathbb{K}^{\bullet},d_{\scriptsize{\mbox{GH}}}).
\end{equation*}
Hence from the convergence of Gromov-Hausdorff distance and with
the help of Lemma~{\ref{L:diseq}}, we get
\begin{eqnarray*}
\frac{\mathfrak{m}_k\sigma_{\Omega}}{2\sqrt{n}}{\sf B}_{n,k} \xrightarrow{d}\CMcal{T}_{e}\,\quad\mbox{ and }\quad\,\frac{\mathfrak{m}_k\sigma_{\Omega}}{2\sqrt{n}}{\sf G}_{n,k}^{\bullet}\xrightarrow{d}\CMcal{T}_{e}
\end{eqnarray*}
where ${\sf G}_{n,k}^{\bullet}$ is the corresponding rooted $\Omega$-$k$-tree of ${\sf B}_{n,k}$ under the bijection $\varphi^{-1}:{\sf B}_{n,k}\mapsto {\sf G}_{n,k}^{\bullet}$. In the beginning of Section~\ref{S:proof} and in subsection~\ref{ss:red} we know that it suffices to prove Theorem~\ref{T:crtk} for the random $k$-tree ${\sf G}_{n,k}^{\bullet}$ that is uniformly selected from $\CMcal{G}_{n,k}^{\bullet}$. This indicates
\begin{eqnarray*}
\frac{\mathfrak{m}_k\sigma_{\Omega}}{2\sqrt{n}}{\sf G}_{n,k}^{\circ}
\xrightarrow{d}\CMcal{T}_{e}\quad \,\mbox{ and }\quad \frac{\mathfrak{m}_k\sigma_{\Omega}}{2\sqrt{n}}{\sf G}_{n,k}\xrightarrow{d}\CMcal{T}_{e}\quad\,\mbox{ where }\,\mathfrak{m}_k=kH_{k}.
\end{eqnarray*}
In particular, if $\Omega=\mathbb{N}_0$, then $\sigma_{\mathbb{N}_0}=k\cdot\mathbb{V}\mbox{ar}(\xi_{\circ})=1$ where $\xi_{\circ}$ is Poisson distributed with parameter $k^{-1}$. The proof of Theorem~\ref{T:crtk} is complete.
\qed
\section{Proof of Theorem~\ref{T:local}}\label{L:proof}
In this section, we are going to construct an infinite $\Omega$-$k$-tree ${\sf G}_{\infty, k}$ that is rooted at a front of distinguishable vertices. We then establish the convergence of ${\sf G}_{n,k}^\circ$ toward this random graph in the sense, that for each fixed integer $\ell \ge 0$ the front-rooted sub-$(k,\Omega)$-tree $U_\ell({\sf G}_{n,k}^\circ)$ that is induced by all vertices with distance at most $\ell$ from the marked front, converges in distribution to the corresponding sub-$(k,\Omega)$-tree $U_\ell({\sf G}_{\infty, k})$ of the limit object.

By the discussion in Subsection~\ref{ss:red}, the random $\Omega$-$k$-tree ${\sf G}_{n,k}^\circ$ is up to relabeling distributed like the $\Omega$-$k$-tree ${\sf G}_{n,k}^\square$ that is rooted at a fixed front with labels from $1$ to $k$. Hence we only need to study the neighborhoods of the root-front. If we distinguish any fixed vertex of the marked front in ${\sf G}_{n,k}^\square$, for example the vertex with label $1$, and also distinguish a fixed vertex of the marked front in ${\sf G}_{\infty, k}$, then our limit may be interpreted as a classical local weak limit of a sequence of vertex-rooted random graphs as discussed in Subsection \ref{ss:loco}. This may be justified by the following two arguments. First, as rooted graphs, all $k$ possible vertex-rootings of ${\sf G}_{n,k}^\square$ are identically distributed, and we shall see below that the same is true for the limit ${\sf G}_{\infty, k}$. Second, the $\ell$-neighborhood of a vertex of any front-rooted $\Omega$-$k$-tree is always a subgraph of the $\ell$-neighborhood of the marked front, and hence the weak convergence of the neighborhoods of the front implies the weak convergence of the neighborhoods of the vertices.

\begin{figure}[h]
	\begin{center}
		\includegraphics[scale=0.8]{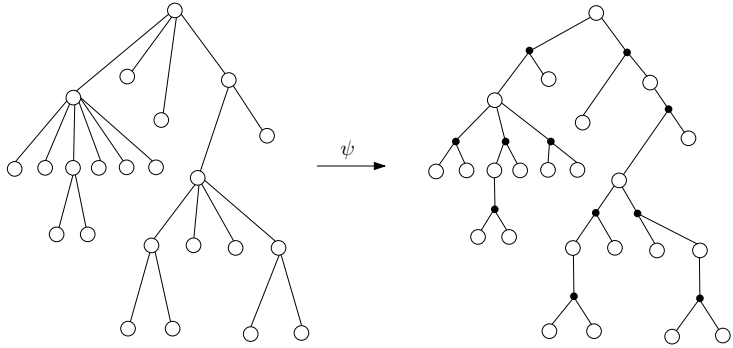}
		\caption{ The construction of $(k,\Omega)$-front coding trees out of plane trees where the outdegree of each vertex is a multiple of $k$, illustrated for the special case $k=2$.
			\label{F:new}}
	\end{center}
\end{figure}

The strategy of the proof is as follows. We may generate the random $\Omega$-$k$-tree ${\sf G}_{n,k}^\square$ by applying the bijection $\varphi^{-1}: \CMcal{C}_{n,k} \rightarrow  \CMcal{G}_{n,k}^{\square} $ to the  random $(k,\Omega)$-front coding tree $\mathsf{C}_{n,k}$. This random coding tree may be generated by conditioning a Boltzmann sampler $\Gamma C_k(\rho_{k,\Omega})$ on producing a coding tree with $n$ black vertices. We  observe that any ordered tree of white vertices where the outdegree of any vertex is a multiple of $k$ may be interpreted as a $(k,\Omega)$-front coding tree by adding black vertices in a canonical way. Here different plane trees may correspond to the same unlabelled $(k,\Omega)$-tree, but this will not be an issue. We may use this construction in order to formulate a coupling of the Boltzmann sampler $\Gamma C_k(\rho_{k,\Omega})$ with a Galton--Watson tree ${\sf T}_\circ$ that has a modified root-degree. If we condition this locally modified Galton--Watson tree on having $(kn + 1)$ vertices, then the result ${\sf T}_{n, \circ}$ corresponds, up to relabeling, to the $(k,\Omega)$-front coding tree $\mathsf{C}_{n,k}$. By the similar arguments as for the classical local convergence of simply generated trees, the random tree ${\sf T}_{n, \circ}$ converges weakly toward an infinite plane tree ${\sf T}_{\infty, \circ}$ that may be interpreted as a $(k,\Omega)$-coding tree ${\sf C}_{\infty,k}$ and consequently also as a front-rooted $\Omega$-$k$-tree ${\sf G}_{\infty, k}$. The final step in the proof is to deduce local convergence of the random $\Omega$-$k$-tree ${\sf G}_{n,k}^\square$ from this convergence of random trees.

The construction of a $(k,\Omega)$-front coding tree $\psi(T)$ out of a plane trees $T$, where the outdegree of each vertex is a multiple of $k$, is straight-forward. We canonically partition the offspring set of each vertex $v$ of $T$ into an ordered list of groups $G_1(v), G_2(v), \ldots$ of $k$ consecutive vertices. The edges between $v$ and its offspring are then deleted, and for each group $G_i(v)$ we add a black offspring vertex $u_i(v)$ to $v$ and add further edges such that $G_i(v)$ is the offspring set of $u_i(v)$. This construction is illustrated in Figure~\ref{F:new}.

We may now use this to formulate a coupling of Boltzmann distributed $(k,\Omega)$-front coding trees with a modified Galton--Watson tree. Similar as in Remark~\ref{re:boaut}, a Boltzmann sampler $\Gamma C_k(\rho_{k,\Omega})$ is given by starting with a white root, and connecting it with the roots of a random number $\eta_\circ$ of independent $\circ-\bullet$ $(k,\Omega)$-front coding trees where each is sampled according to an independent call to the Boltzmann sampler $\Gamma B_k(\rho_{k, \Omega})$ from Lemma~\ref{L:Boltz}. The distribution of $\eta_\circ$ is given by
\[
\mathbb{P}(\eta_\circ=i) = \frac{1}{C_k(\rho_{k, \Omega})} \frac{(B_k(\rho_{k,\Omega}))^i}{i!}
\]
for all $i \in \Omega$. Recall that the sampler in Lemma~\ref{L:Boltz} starts with a black node with $k$ white nodes as offspring. Each of the white nodes receives black offspring according to an independent copy of the random number $\xi_\circ$, whose distribution is given in \eqref{E:bullet}. Then the sampler recurs, that is, any black node in the youngest generation receives $k$ white vertices as offspring, each of which receives a random number (possibly zero) of black offspring, and so on.

Let ${\sf T}_\circ$ denote a modified Galton--Watson tree, where each vertex receives offspring according to an independent copy of $\xi := k \xi_\circ$, except for the root, which receives offspring according to $\eta := k \eta_\circ$. The order in which the recursion takes place in $\Gamma B_k(\rho_{k, \Omega})$ and $\Gamma C_k(\rho_{k,\Omega})$ does not matter, hence the $(k,\Omega)$-coding tree $\psi({\sf T}_\circ)$ is up to relabeling distributed like the $(k,\Omega)$-coding tree $\Gamma C_k(\rho_{k,\Omega})$. Moreover, if we let ${\sf T}_{n, \circ}$ denote the tree  ${\sf T}_\circ$ conditioned on having $(kn +1)$ vertices, then $\psi({\sf T}_{n,\circ})$ is distributed like the random $(k, \Omega)$-front coding tree ${\sf C}_{n,k}$.

Note that \eqref{eq:fm} implies that
$
	\mathbb{E}[\xi] =1,
$
and both $\xi$ and $\eta$ have finite exponential moments. We define the size-biased versions of these offspring distributions by
\[
	\mathbb{P}({\hat{\xi} = i}) = i \mathbb{P}({\xi=i}) \quad \text{and} \quad \mathbb{P}({\hat{\eta} = i}) = i \mathbb{P}(\eta=i) / \mathbb{E}[\eta].
\]
Let $T_{\infty, \circ}$ denote the following random infinite (but locally finite) plane tree. There are two types of non-root vertices, mutant and normal. The root receives offspring according to $\hat{\eta}$, and one of its sons is selected uniformly at random and declared mutant, whereas the others are normal. Normal vertices receive offspring according to an independent copy of $\xi$, all of which are normal. Mutant vertices receive offspring according to an independent copy of $\hat{\xi}$, among which one is selected uniformly at random and declared mutant, whereas the others are normal. Hence ${\sf T}_{\infty, \circ}$ is an infinite plane tree with a distinguished path that starts at the root and traverses the mutant vertices. We call this path the spine of ${\sf T}_{\infty, \circ}$.

We describe the convergence of the random tree ${\sf T}_{n, \circ}$ toward the limit tree ${\sf T}_{\infty, \circ}$ using a slight modification of the arguments in Janson's survey \cite{Janson:11}. For each plane tree $T$ and each integer $h \ge 0$ let $T^{[h]}$ denote the tree obtained by cutting away all vertices with height larger than $h$.
\begin{lemma}
	\label{le:trconv}
	For any integer $h \ge 0$, it holds that
	$
	{\sf T}_{n, \circ}^{[h]} \xrightarrow{d} {\sf T}_{\infty, \circ}^{[h]}.
	$
\end{lemma}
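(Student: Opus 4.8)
The plan is to follow the classical scheme for the local limit of conditioned Galton--Watson trees, as in Section~7 of Janson's survey~\cite{Janson:11}, adapted to the present setting where every non-root vertex of ${\sf T}_\circ$ has offspring $\xi = k\xi_\circ$ with $\mathbb{E}[\xi]=1$ (by \eqref{eq:fm}) and finite exponential moments, while the root has the modified offspring law $\eta = k\eta_\circ$, also with finite exponential moments. Since ${\sf T}_{n,\circ}^{[h]}$ and ${\sf T}_{\infty,\circ}^{[h]}$ take values in a countable set, it suffices to prove that $\mathbb{P}({\sf T}_{n,\circ}^{[h]} = T) \to \mathbb{P}({\sf T}_{\infty,\circ}^{[h]} = T)$ for every fixed finite plane tree $T$ of height at most $h$; the limit is then automatically a probability distribution because ${\sf T}_{\infty,\circ}$ is almost surely locally finite (as $\mathbb{E}[\xi]<\infty$, hence $\mathbb{E}[\hat\xi]=\mathbb{E}[\xi^2]<\infty$ and $\mathbb{E}[\hat\eta]=\mathbb{E}[\eta^2]/\mathbb{E}[\eta]<\infty$), and pointwise convergence of probability mass functions toward a probability mass function on a countable set implies convergence in distribution.

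The first step is to write the conditional probability explicitly. Let $\tau_\xi$ denote an unconditioned $\xi$-Galton--Watson tree and set $\pi(s)=\mathbb{P}(|\tau_\xi|=s)$. If $T$ has height strictly less than $h$, then ${\sf T}_{n,\circ}^{[h]}=T$ forces $|{\sf T}_{n,\circ}|=|T|\neq kn+1$ for large $n$, so both sides vanish. Otherwise let $v_1,\dots,v_\ell$ (with $\ell\geq 1$) be the vertices of $T$ at height exactly $h$; conditioning the height-$h$ window of ${\sf T}_{n,\circ}$ on equalling $T$ means exactly that $v_1,\dots,v_\ell$ independently root copies of $\tau_\xi$, whence
\begin{equation*}
\mathbb{P}({\sf T}_{n,\circ}^{[h]}=T)=\frac{w(T)}{\mathbb{P}(|{\sf T}_\circ|=kn+1)}\sum_{s_1+\dots+s_\ell=N}\prod_{j=1}^{\ell}\pi(s_j),\qquad N:=kn+1-|T|+\ell,
\end{equation*}
where $w(T)$ is the product of the offspring probabilities of the vertices of $T$ of height $\le h-1$ (using the law of $\eta$ at the root and that of $\xi$ elsewhere). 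Now I would invoke the local limit theorem for the size of a critical Galton--Watson tree of finite variance $\sigma^2=\mathbb{V}\mbox{ar}\,\xi$, namely $\pi(s)\sim (\mathfrak d/\sqrt{2\pi\sigma^2})\,s^{-3/2}$ as $s\to\infty$ along the support, $\mathfrak d$ being the span; see \cite{Drmotabook,FS}. Feeding this into the classical ``single big jump'' asymptotics for sums of i.i.d.\ variables with such a tail gives $\mathbb{P}(|{\sf T}_\circ|=kn+1)\sim\mathbb{E}[\eta]\,(\mathfrak d/\sqrt{2\pi\sigma^2})\,(kn)^{-3/2}$ and $\sum_{s_1+\dots+s_\ell=N}\prod_j\pi(s_j)\sim \ell\,(\mathfrak d/\sqrt{2\pi\sigma^2})\,(kn)^{-3/2}$; the constant cancels in the ratio and we obtain $\mathbb{P}({\sf T}_{n,\circ}^{[h]}=T)\to \ell\,w(T)/\mathbb{E}[\eta]$.

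It then remains to check that $\mathbb{P}({\sf T}_{\infty,\circ}^{[h]}=T)$ has the same value, which I would do by reading the law of ${\sf T}_{\infty,\circ}$ directly. For ${\sf T}_{\infty,\circ}^{[h]}=T$ the spine must reach one of the $\ell$ height-$h$ vertices of $T$, say $v$; summing over this choice and using that along the spine each size-biasing weight $\deg\cdot\mathbb{P}(\hat\xi=\deg)$ cancels against the probability $1/\deg$ of continuing the spine into the prescribed child (and $\deg\cdot\mathbb{P}(\hat\eta=\deg)/\mathbb{E}[\eta]$ against $1/\deg$ at the root), every spine factor collapses to an ordinary $\xi$-offspring weight, so each of the $\ell$ choices contributes $w(T)/\mathbb{E}[\eta]$ and the total is $\ell\,w(T)/\mathbb{E}[\eta]$. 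This matches the previous limit; the induction over $h$ is subsumed in this direct computation.

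The step that I expect to require the most care is the periodicity bookkeeping. The span $\mathfrak d$ must be tracked so that the local limit theorem and the single-big-jump estimate are applied along the correct residue classes, and one has to verify that for every $T$ with $w(T)>0$ and height $h$ the number $N=kn+1-|T|+\ell$ lies in the residue class making the sum over $(s_1,\dots,s_\ell)$ non-empty for the relevant $n$ (equivalently, that $|T|$, $kn+1$ and the realizable sizes of ${\sf T}_\circ$ all sit in matching classes, which holds because all degrees of $T$ below the root are multiples of $k$ and the root degree $k\eta_\circ$ with $\eta_\circ\in\Omega$ lies in a single class modulo $\mathfrak d$). This is exactly the periodic case of the classical statement and brings no genuinely new difficulty; the only other routine point is justifying the interchange of limit and summation in the estimate for $\mathbb{P}(|{\sf T}_\circ|=kn+1)$, for which one uses the uniform bound $\pi(s)=O(s^{-3/2})$ together with $\mathbb{E}[\eta]<\infty$.
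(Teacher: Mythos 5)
Your proposal is correct, and it reaches the same limit law as the paper ($\ell\,w(T)/\mathbb{E}[\eta]$ matches the paper's $(D-t+1)\,\mathbb{E}[\eta]^{-1}\mathbb{P}(\eta=d_1)\prod_{j=2}^t\mathbb{P}(\xi=d_j)$, since $\ell=D-t+1$), but the route is genuinely different. The paper never decomposes at height $h$ into independent subtrees: it encodes the whole event $\{|{\sf T}_\circ|=N,\ {\sf T}_\circ^{[h]}=T\}$ through the depth-first-search degree sequence, applies the cycle lemma directly to the resulting ballot-type condition to produce the combinatorial factor $(D-t+1)/(N-t)$, and then needs only one local central limit theorem for the lattice random walk $D+\sum_{j>t}\xi_j$; the same computation, run once more for $\mathbb{P}(|{\sf T}_\circ|=N)$, makes the ratio collapse. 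You instead peel off the $\ell$ independent $\xi$-Galton--Watson subtrees hanging at height $h$ and control their total size via the $s^{-3/2}$ local limit theorem for total progeny combined with a local one-big-jump estimate for the $\ell$-fold convolution. Your version is arguably more probabilistic and makes the role of the $\ell$ height-$h$ vertices (one of which carries the surviving macroscopic subtree, i.e.\ the spine) transparent, at the cost of importing two auxiliary facts instead of one: you still need Otter--Dwass (or the cycle lemma) to get $\pi(s)\sim c\,s^{-3/2}$, and you additionally need the local subexponential convolution estimate; you have also correctly isolated the periodicity bookkeeping as the delicate point, which the paper's single-random-walk formulation handles implicitly through the factor $\gcd(\Omega_{\scriptsize{\mbox{out}}})$ in its local CLT. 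One small caveat: for the interchange of limit and sum over the root degree $d$ in $\mathbb{P}(|{\sf T}_\circ|=kn+1)$, the bound $\pi(s)=O(s^{-3/2})$ alone is not quite the right dominating object; you want the uniform bound $\mathbb{P}\bigl(\sum_{j=1}^d|\tau_j|=m\bigr)\le C\,d\,m^{-3/2}$, which follows from the generalized Otter--Dwass identity $\mathbb{P}\bigl(\sum_{j=1}^d|\tau_j|=m\bigr)=\frac{d}{m}\mathbb{P}(S_m=m-d)$ and the concentration bound $\mathbb{P}(S_m=m-d)=O(m^{-1/2})$, together with $\mathbb{E}[\eta]<\infty$.
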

\begin{proof}
	It suffices to show for each plane tree $T$ with height  $h$ that
	\begin{align}
		\label{eq:tos}
		\lim_{n \to \infty} \mathbb{P}({{\sf T}_{n, \circ}^{[h]} = T}) = \mathbb{P}({{\sf T}_{\infty, \circ}^{[h]} = T}).
	\end{align}
	As ${\sf T}_{\infty, \circ}$ has infinite height, this already implies that $\mbox{H}({\sf T}_{n, \circ}) \ge h$ occurs with probability tending to $1$, and consequently 	$
	{\sf T}_{n, \circ}^{[h]} \xrightarrow{d} {\sf T}_{\infty, \circ}^{[h]}
	$. In order to check \eqref{eq:tos}, let $d_1, \ldots, d_t$ denote the depth-first-search ordered list of the degrees of all vertices in the pruned tree $T^{[h-1]}$. Moreover, let $(\xi_i)_{i \in \mathbb{N}}$ denote a family of independent copies of $\xi$. Set $N = kn +1$ and $D = d_1 + \cdots +d_t$. The probability $\mathbb{P}(|{\sf T}_{\circ}| =N, {\sf T}_{\circ}^{[h]} = T)$ is given by
	\begin{align}
	 \label{eq:tmp}
	 \mathbb{P}({\eta = d_1}) \left ( \prod_{j=2}^t \mathbb{P}({\xi = d_j}) \right ) \mathbb{P}(D + \sum_{j=t+1}^N \xi_j = N-1, D + \sum_{j=t+1}^m \xi_j \ge m \text{ for all $t <m<N$}).
	\end{align}
	A classical combinatorial observation, also called the cycle lemma, states that for any sequence $x_1, \ldots, x_{s} \ge -1$  of integers satisfying
		$
		\sum_{i=1}^s x_i = -r
		$
		for some $r \ge 1$, there are precisely $r$ integers $1 \le u \le s$ such that the cyclically shifted sequence $x_i^{(u)} = x_{1 + (i+u)\mod s}$ satisfies
		$
		\sum_{i=1}^\ell x_i^{(u)} > r
		$
		for all $1 \le \ell \le s-1$;
	see for example {\cite[Lem. 15.3]{Janson:11}}. Consequently, \eqref{eq:tmp} may be simplified to
	\begin{align}
		\label{eq:tmp1}
		\frac{D-t+1}{N-t} \mathbb{P}({\eta = d_1}) \left ( \prod_{j=2}^t \mathbb{P}({\xi = d_j}) \right ) \mathbb{P}(D + \sum_{j=t+1}^N \xi_j = N-1).
	\end{align}
	The tree $T$ has precisely $(D-t+1)$ vertices with height $h$. Hence the event ${\sf T}_{\infty, \circ}^{[h]} = T$ corresponds to precisely $(D-t+1)$ possible outcomes for the first $(h+1)$ levels of ${\sf T}_{\infty, \circ}$, depending on the location for the unique spine vertex with height $h$. Each has the same probability given by
	\[
		\mathbb{E}[{\eta}]^{-1}\mathbb{P}({\eta = d_1}) \prod_{j=2}^t \mathbb{P}({\xi = d_j}).
	\]
	Thus, $\mathbb{P}({\sf T}_{\infty, \circ}^{[h]} = T)=(D-t+1)\mathbb{E}[{\eta}]^{-1}\mathbb{P}({\eta = d_1}) \prod_{j=2}^t \mathbb{P}({\xi = d_j})$ and \eqref{eq:tmp} becomes
	\[
	\mathbb{P}(|{\sf T}_{\circ}| =N, {\sf T}_{\circ}^{[h]} = T) = \mathbb{P}({\sf T}_{\infty, \circ}^{[h]} = T) \frac{\mathbb{E}[\eta]}{N-t} \mathbb{P}(D + \sum_{j=t+1}^N \xi_j = N-1) .
	\]
	The central local limit theorem for the sum of independent identically distributed random integers yields that
	\[
		\mathbb{P}(D + \sum_{j=t+1}^N \xi_j = N-1) = (1 + o(1)) \frac{ k \gcd{(\Omega_{\scriptsize{\mbox{out}}}})}{\sqrt{2 \pi N \mathbb{V}\mbox{ar}[\xi]} }
	\]
	and consequently
	\begin{align}
	\label{eq:almost}
	\mathbb{P}(|{\sf T}_{\circ}| =N, {\sf T}_{\circ}^{[h]} = T) = (1 + o(1)) \mathbb{P}({\sf T}_{\infty, \circ}^{[h]} = T) n^{-3/2} \frac{\mathbb{E}[\eta]  \gcd{(\Omega_{\scriptsize{\mbox{out}}}})}{\sqrt{2 \pi k \mathbb{V}\mbox{ar}[\xi]} }.
	\end{align}
	Let $d(o)$ denote the root-degree of ${\sf T}_\circ$. It holds, since $\zeta$ has finite exponential moments, that $\mathbb{P}({\eta \ge \log(n)^2})$ is exponentially small. Hence, using the cycle lemma and central local limit theorem in an identical fashion as above, it follows that
	\begin{align*}
		\mathbb{P}({|{\sf T}_\circ|} = N)
		&= o(n^{-3/2}) + \sum_{d=1}^{\log(n)^2}\mathbb{P}({\eta = d})\frac{d}{N-1} \mathbb{P}({d + \sum_{j=2}^N \xi_j = N-1}) \\
		&= (1 + o(1)) n^{-3/2} \mathbb{E}[{\eta}] \frac{ \gcd{(\Omega_{\scriptsize{\mbox{out}}}})}{\sqrt{2 \pi k  \mathbb{V}\mbox{ar}[\xi]} },
	\end{align*}
which, together with \eqref{eq:almost}, implies \eqref{eq:tos} and we are done.
\end{proof}

We are now finally in the position to complete the proof of our second main theorem.
\begin{proof}[Proof of Theorem~\ref{T:local}]
	Let $\ell$ be an integer and let $G$ be an arbitrary finite unlabelled $\Omega$-$k$-tree that is rooted at a front. We claim that there exist an integer $L\ge 0$, that depends on both $\ell$ and $G$, and a set $\mathcal{E}$ of finite plane trees, such that any plane tree $T$, that corresponds to a $(k,\Omega)$-front coding tree $\psi(T)$ and hence to a front-rooted  $\Omega$-$k$-tree $G(T) := \varphi^{-1}(\psi(T))$, has the property $U_\ell(G(T)) = G$ if and only if $T^{[L]} \in \mathcal{E}$.
	
	This is certainly sufficient for deducing Theorem~\ref{T:local}, as by Lemma~\ref{le:trconv} it then follows that
	\[
		\lim_{n \to \infty} \mathbb{P}({ {\sf T}_{n, \circ}^{[L]} \in \mathcal{E}}) = \mathbb{P}({ {\sf T}_{\infty, \circ}^{[L]} \in \mathcal{E}})
	\]
	and consequently
	\[
		\lim_{n \to \infty} \mathbb{P}(U_\ell({\sf G}_{n,k}^\circ) = G) = \mathbb{P}(U_\ell({\sf G}_{\infty,k}) = G)
	\]
	with ${\sf G}_{\infty,k}$ denoting the $\Omega$-$k$-tree corresponding to ${\sf T}_{\infty, \circ}$.
	
	The reason why there exist such an integer $L$ and the set $\mathcal{E}$ is rather subtle. To each plane tree $T$ we may associate a unique sequence of increasing subtrees $T_0, T_1, \ldots$ of $T$ that all contain the root-vertex of $T$ and have the property $G(T_i) = U_i(G(T))$ for all $i$. Of course, the tree $T_\ell$ may, in general, have arbitrarily large height. However, in order to satisfy $G(T_\ell)=G$, the tree $T_\ell$ may not have more vertices, than the number of fronts in $G$. In particular, the height of $T_\ell$ is bounded by the number of fronts of $G$. Hence there exists a finite integer $L$ such that for any plane tree $T$ we may decide whether $U_\ell(G(T))=G$ by only looking at $T^{[L]}$.
\end{proof}

\section*{Acknowledgement}
We would like to thank three anonymous reviewers from Analco 16 for their very helpful suggestions and comments on the earlier version of this manuscript.

\end{document}